\definecolor{dark-red}{rgb}{0.4,0.15,0.15}
\definecolor{dark-blue}{rgb}{0.15,0.15,0.4}
\definecolor{medium-blue}{rgb}{0,0,0.5}
\newcommand*{\defeq}{\mathrel{\rlap{%
                     \raisebox{0.3ex}{$\m@th\cdot$}}%
                     \raisebox{-0.3ex}{$\m@th\cdot$}}%
                     =}
\renewcommand\aa{\mathfrak{a}}
\newcommand\A{\mathbb{A}}
\renewcommand\C{\mathbb{C}}
\newcommand\GL{\mathrm{GL}}
\newcommand\OO{\mathcal{O}}
\newcommand\pp{\mathfrak{p}}
\newcommand\Q{\mathbb{Q}}
\newcommand\qq{\mathfrak{q}}
\newcommand\R{\mathbb{R}}
\def\e{\varepsilon}
\DeclareMathOperator{\ad}{ad}
\DeclareMathOperator*{\Res}{Res}
\numberwithin{equation}{section}
\newtheorem{theorem}[equation]{Theorem}
\newtheorem{corollary}[equation]{Corollary}
\newtheorem{lemma}[equation]{Lemma}
\newtheorem{thm}[subsection]{Theorem}
\newtheorem{lem}[subsection]{Lemma}
\theoremstyle{remark}
\newtheorem{remark}[equation]{Remark}
\newtheorem{rem}[equation]{Remark}
\let\@wraptoccontribs\wraptoccontribs
\begin{document}

\title[Zero-Free Regions for Rankin--Selberg $L$-Functions via Sieve Theory]{Standard Zero-Free Regions for Rankin--Selberg $L$-Functions via Sieve Theory}


\author{Peter Humphries}
\address{Department of Mathematics, University College London, Gower Street, London WC1E 6BT}
\email{\href{mailto:pclhumphries@gmail.com}{pclhumphries@gmail.com}}

\contrib[with an appendix by]{Farrell Brumley}

\keywords{cuspidal automorphic representation, Rankin--Selberg, zero-free region}

\subjclass[2010]{11M26 (primary); 11F66, 11N36 (secondary)}

\thanks{Research supported by the European Research Council grant agreement 670239.}

\begin{abstract}
We give a simple proof of a standard zero-free region in the $t$-aspect for the Rankin--Selberg $L$-function $L(s,\pi \times \widetilde{\pi})$ for any unitary cuspidal automorphic representation $\pi$ of $\GL_n(\A_F)$ that is tempered at every nonarchimedean place outside a set of Dirichlet density zero.
\end{abstract}

\maketitle


\section{Introduction}

Let $F$ be a number field, let $n$ be a positive integer, and let $\pi$ be a unitary cuspidal automorphic representation of $\GL_n(\A_F)$ with $L$-function $L(s,\pi)$, with $\pi$ normalised such that its central character is trivial on the diagonally embedded copy of the positive reals. The proof of the prime number theorem due to de la Valle\'{e}-Poussin gives a zero-free region for the Riemann zeta function $\zeta(s)$ of the form
\[\sigma > 1 - \frac{c}{\log(|t| + 3)}\]
for $s = \sigma + it$, and this generalises to a zero-free region for $L(s,\pi)$ of the form
\begin{equation}\label{zerofreeeq}
\sigma \geq 1 - \frac{c}{(n[F : \Q])^4 \log(\qq(\pi)(|t| + 3))}
\end{equation}
for some absolute constant $c > 0$, where $\qq(\pi)$ is the analytic conductor of $\pi$ in the sense of \cite[Equation (5.7]{IK}, with the possible exception of a simple real-zero $\beta_{\pi} < 1$ when $\pi$ is self-dual. A proof of this is given in \cite[Theorem 5.10]{IK}; the method requires constructing an auxiliary $L$-function having a zero of higher order than the order of the pole at $s = 1$, then using an effective version of Landau's lemma \cite[Lemma 5.9]{IK}.

Now let $\pi'$ be a unitary cuspidal automorphic representation of $\GL_{n'}(\A_F)$, and consider the Rankin--Selberg $L$-function $L(s,\pi \times \pi')$. Via the Langlands--Shahidi method, this extends meromorphically to the entire complex plane with at most a simple pole at $s = 1$, with this pole occurring precisely when $\pi' \cong \widetilde{\pi}$. Moreover, this method shows that $L(s,\pi \times \pi')$ is nonvanishing in the closed right half-plane $\Re(s) \geq 1$ \cite[Theorem]{Sha}.

\begin{remark}
One can also obtain the nonvanishing of $L(s,\pi \times \pi')$ on the line $\Re(s) = 1$ via the Rankin--Selberg method. For $n = n'$ and $\pi' \ncong \widetilde{\pi}$, this is shown in \cite[Theorem 6.1]{Mor85}; the method of proof nonetheless is equally valid for $n \neq n'$ or $\pi' \cong \widetilde{\pi}$, noting in the latter case that $L(s,\pi \times \widetilde{\pi})$ has a simple pole at $s = 1$ (see also \cite[Equation (1.5)]{Sar}). Note, however, that the product of $L$-functions considered in \cite[Remark, p.~198]{Mor85} may \emph{not} be used to show the desired nonvanishing of $L(1 + it,\pi \times \pi')$, but merely the nonvanishing of $L(1,\pi \times \pi')$.
\end{remark}

Proving zero-free regions for $L(s,\pi \times \widetilde{\pi})$ akin to \eqref{zerofreeeq}, on the other hand, seems to be much more challenging. The method of de la Valle\'{e}-Poussin relies on the fact that the Rankin--Selberg convolutions $L(s,\pi \times \pi)$ and $L(s,\pi \times \widetilde{\pi})$ exist and extend meromorphically to $\C$ with at most a simple pole at $s = 1$. For $L(s,\pi \times \pi')$, the associated Rankin--Selberg convolutions have yet to be proved to have these properties, so as yet this method is inapplicable.

\begin{remark}
Note that in \cite[Exercise 4, p.~108]{IK}, it is claimed that one can use this method to prove a zero-free region similar to \eqref{zerofreeeq} when $\pi' \ncong \pi$ and $\pi' \ncong \widetilde{\pi}$; however, the hint to this exercise is invalid, as the Dirichlet coefficients of the logarithmic derivative of the auxiliary $L$-function suggested in this hint are real but not necessarily nonpositive. (In particular, as stated, \cite[Exercise 4, p.~108]{IK} would imply the nonexistence of Landau--Siegel zeroes upon taking $f$ to be a quadratic Dirichlet character and $g$ to be the trivial character.)
\end{remark}

\begin{remark}
When at least one of $\pi$ and $\pi'$ is self-dual, then this method \emph{can} be used to prove a zero-free region akin to \eqref{zerofreeeq}. When both $\pi$ and $\pi'$ are self-dual, this is proved by Moreno \cite[Theorem 3.3]{Mor85} (see also \cite[Equation (1.6)]{Sar}). When only one of $\pi$ and $\pi'$ is self-dual, such a zero-free region has been stated by various authors (in particular, see \cite[p.~619]{GeLa}, \cite[p.~92]{GLS}, and \cite[p.~1]{GoLi}); to the best of our knowledge, however, no proof of this claim has appeared in the literature. In the appendix to this article written by Farrell Brumley, a complete proof of this result is given.
\end{remark}

In \cite{GeLa}, Gelbart and Lapid generalise Sarnak's effectivisation of the Langlands--Shahidi method for $\zeta(s)$ \cite{Sar} to prove a zero-free region for $L(s,\pi \times \pi')$ of the form
\[\sigma \geq 1 - \frac{c_{\pi,\pi'}}{|t|^{N_{\pi,\pi'}}}\]
for some positive constants $c_{\pi,\pi'}, N_{\pi,\pi'}$ dependent on $\pi$ and $\pi'$, provided that $|t|$ is sufficiently large; their method applies not only to automorphic representations of $\GL_n(\A_F)$ but to more general reductive groups.

In \cite{Bru} and \cite[Appendix]{Lap}, Brumley proves a more explicit zero-free region for $L(s,\pi \times \pi')$ that is also valid in the analytic conductor aspect and not just the $t$-aspect. For $\pi' \ncong \widetilde{\pi}$, this is of the form
\[\sigma \geq 1 - c \left(\left(\qq(\pi) \qq(\pi')\right)^{2(n + n')} (|t| + 3)^{2nn' [F : \Q]}\right)^{-\frac{1}{2} + \frac{1}{2(n + n')} - \e},\]
together with the bound
\[L(s,\pi \times \pi') \gg_{\e} \left(\left(\qq(\pi) \qq(\pi')\right)^{2(n + n')} (|t| + 3)^{2nn' [F : \Q]}\right)^{-\frac{1}{2} + \frac{1}{2(n + n')} - \e}\]
for $s$ in this zero-free region, while for $\pi' \cong \widetilde{\pi}$, this is of the form
\begin{equation}\label{Brumleyselfdualzerofree}
\sigma \geq 1 - c\left(\qq(\pi)^{8n} (|t| + 3)^{2n^2 [F : \Q]}\right)^{-\frac{7}{8} + \frac{5}{8n} - \e},
\end{equation}
together with the bound
\begin{equation}\label{Brumleyselfduallowerbound}
L(s,\pi \times \widetilde{\pi}) \gg_{\e} \left(\qq(\pi)^{8n} (|t| + 3)^{2n^2 [F : \Q]}\right)^{-\frac{7}{8} + \frac{5}{8n} - \e}
\end{equation}
for $s$ in this zero-free region.

Recently, Goldfeld and Li \cite{GoLi} have given a strengthening in the $t$-aspect of a particular case of Brumley's result, namely the case $\pi' \cong \widetilde{\pi}$ subject to the restriction that $F = \Q$ and that $\pi$ is unramified and tempered at every nonarchimedean place outside a set of Dirichlet density zero. With these assumptions, they prove the lower bound
\begin{equation}\label{GoLilowerbound}
L(1 + it,\pi \times \widetilde{\pi}) \gg_{\pi} \frac{1}{(\log(|t| + 3))^3}
\end{equation}
for $|t| \geq 1$, which gives a zero-free region of the form
\begin{equation}\label{GoLizerofree}
\sigma \geq 1 - \frac{c_{\pi}}{(\log(|t| + 3))^5}
\end{equation}
for some positive constant $c_{\pi}$ dependent on $\pi$ provided that $|t| \geq 1$. Their proof, like that of Gelbart and Lapid \cite{GeLa}, makes use of Sarnak's effectivisation of the Langlands--Shahidi method; the chief difference is that, like Sarnak but unlike Gelbart and Lapid, they are able to use sieve theory to obtain a much stronger zero-free region. On the downside, the proof is extremely long and technical, and, being written in the classical language instead of the ad\`{e}lic language, any generalisation of their method to arbitrary number fields and allowing ramification of $\pi$ would be a challenging endeavour. (Indeed, the Langlands--Shahidi method, in practice, is rather inexplicit at ramified places, though see \cite{Hum} for explicit calculations for the case $n = 1$ and $F = \Q$, so that $\pi$ corresponds to a primitive Dirichlet character.)

In this article, we give a simple proof of the following.

\begin{theorem}\label{mainthm}
Let $\pi$ be a unitary cuspidal automorphic representation of $\GL_n(\A_F)$ that is tempered at every nonarchimedean place outside a set of Dirichlet density zero. Then there exists an absolute constant $c_{\pi}$ dependent on $\pi$ (and hence also on $n$ and $F$) such that $L(s,\pi \times \widetilde{\pi})$ has no zeroes in the region
\begin{equation}\label{mainzerofree}
\sigma \geq 1 - \frac{c_{\pi}}{\log(|t| + 3)}
\end{equation}
with $|t| \geq 1$. Furthermore, we have the bound
\begin{equation}\label{mainlowerbound}
L(s,\pi \times \widetilde{\pi}) \gg_{\pi} \frac{1}{\log(|t| + 3)}
\end{equation}
for $s$ in this region.
\end{theorem}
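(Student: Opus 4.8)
\medskip
\noindent\emph{Proof sketch.} The plan is to adapt the classical argument of de la Vall\'{e}e-Poussin directly to $L(s,\pi \times \widetilde{\pi})$. What makes this feasible, in contrast to a general Rankin--Selberg $L$-function $L(s,\pi \times \pi')$, is that $L(s,\pi \times \widetilde{\pi})$ is entire apart from a simple pole at $s=1$ with positive residue, extends holomorphically and zero-freely to $\Re(s) \ge 1$ by Shahidi's theorem, and satisfies a functional equation whose analytic conductor is $\qq(\pi \times \widetilde{\pi}) \ll_{\pi} 1$; sieve theory is then used only to neutralise the places where $\pi$ fails to be tempered.

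First I would record the positivity. Writing $-\frac{L'}{L}(s,\pi \times \widetilde{\pi}) = \sum_{\mathfrak{n}} \Lambda_{\pi \times \widetilde{\pi}}(\mathfrak{n}) N\mathfrak{n}^{-s}$, one has $\Lambda_{\pi \times \widetilde{\pi}}(\mathfrak{p}^{k}) = \bigl|\sum_{i} \alpha_{\pi,\mathfrak{p}}(i)^{k}\bigr|^{2} \log N\mathfrak{p} \ge 0$ at every place $\mathfrak{p}$ where $\pi_{\mathfrak{p}}$ is unramified, this being $\le n^{2} \log N\mathfrak{p}$ when $\pi_{\mathfrak{p}}$ is moreover tempered, whereas at the remaining places the Jacquet--Shalika (or Luo--Rudnick--Sarnak) bounds towards Ramanujan give only $\Lambda_{\pi \times \widetilde{\pi}}(\mathfrak{p}^{k}) \ll_{\pi} N\mathfrak{p}^{2k\theta} \log N\mathfrak{p}$ for some fixed $\theta < \tfrac{1}{2}$. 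Feeding the three arguments $\sigma$, $\sigma + it$, $\sigma + 2it$ into the inequality $3 + 4\cos\vartheta + \cos 2\vartheta = 2(1 + \cos\vartheta)^{2} \ge 0$ then yields
\[
-3 \frac{L'}{L}(\sigma,\pi \times \widetilde{\pi}) - 4\,\Re\frac{L'}{L}(\sigma + it,\pi \times \widetilde{\pi}) - \Re\frac{L'}{L}(\sigma + 2it,\pi \times \widetilde{\pi}) \ge -E(\sigma),
\]
where $E(\sigma)$ collects the contribution of the ramified and non-tempered places. The simple pole at $s=1$ supplies the term $3/(\sigma-1)$ on the left, the term at $\sigma + 2it$ is harmless since $s=1$ is the only pole and $|t| \ge 1$, and the truncated Hadamard formula bounds $-\Re\frac{L'}{L}(\sigma + iu,\pi \times \widetilde{\pi})$ by a pole term together with $-\sum_{\rho} \Re(\sigma + iu - \rho)^{-1} + O_{\pi}(\log(|t|+3))$; isolating a putative zero $\rho_{0} = \beta_{0} + it$ and choosing $\sigma = 1 + \delta/\log(|t|+3)$ for small $\delta$ then delivers \eqref{mainzerofree}, provided one knows $E(\sigma) = O_{\pi}(1)$ as $\sigma \to 1^{+}$.

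For the lower bound \eqref{mainlowerbound} I would apply the same weights to $\log|L|$: from $|L(\sigma,\pi \times \widetilde{\pi})|^{3} |L(\sigma + it,\pi \times \widetilde{\pi})|^{4} |L(\sigma + 2it,\pi \times \widetilde{\pi})| \ge 1$ together with the elementary bound $|L(\sigma + iu,\pi \times \widetilde{\pi})| \le \exp(\log L(\sigma,\pi \times \widetilde{\pi}) + O_{\pi}(1)) \ll_{\pi} (\sigma-1)^{-1}$ (again a consequence of the nonnegativity of the coefficients of $\log L$ at the unramified places and the finiteness of the set of ramified places), one obtains $|L(\sigma + it,\pi \times \widetilde{\pi})| \gg_{\pi} 1/\log(|t|+3)$ at $\sigma = 1 + \delta/\log(|t|+3)$, and this propagates to all of \eqref{mainzerofree} via the zero-free region together with the Borel--Carath\'{e}odory bound $|\frac{L'}{L}(s,\pi \times \widetilde{\pi})| \ll_{\pi} \log(|t|+3)$ there, which controls the variation of $\log L$ over a segment of length $\ll 1/\log(|t|+3)$.

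The step I expect to be the main obstacle, and the one genuinely requiring sieve theory and the density-zero hypothesis, is the estimate $E(\sigma) = O_{\pi}(1)$ --- and, more delicately, keeping every error term above with the \emph{correct} power of $\log(|t|+3)$, so as to reach exponent exactly $1$ in \eqref{mainlowerbound} rather than the weaker exponent in \eqref{GoLilowerbound}. Since the Dirichlet coefficients of $L(s,\pi \times \widetilde{\pi})$ need not be bounded at the non-tempered places, a crude treatment loses powers of $\log(|t|+3)$; but as the exceptional places form a set of Dirichlet density zero, a Rankin-type argument --- a dyadic decomposition fed into the density-zero hypothesis and the bound $\theta < \tfrac{1}{2}$, in the spirit of a simple upper-bound sieve --- should show $\sum_{\mathfrak{p}\ \mathrm{exc}} \sum_{k \ge 1} \Lambda_{\pi \times \widetilde{\pi}}(\mathfrak{p}^{k}) N\mathfrak{p}^{-k\sigma} = O_{\pi}(1)$ uniformly for $\sigma$ near $1$, and likewise bound the contribution of these places to $\log L(\sigma + iu,\pi \times \widetilde{\pi})$. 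The remaining ingredients --- the truncated Hadamard formula, the convexity bound in the critical strip, and the final optimisation of $\delta$ --- are routine, so the whole argument should be short.
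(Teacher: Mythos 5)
Your route is genuinely different from the paper's: you run de la Vall\'{e}e-Poussin directly on $L(s,\pi\times\widetilde{\pi})$ with the $3$-$4$-$1$ inequality at $\sigma,\sigma+it,\sigma+2it$, whereas the paper builds the auxiliary isobaric representation $\Pi = \pi\otimes|\det|^{i\gamma/2}\boxplus\pi\otimes|\det|^{-i\gamma/2}$ and works with $L(s,\Pi\times\widetilde{\Pi}) = L(s,\pi\times\widetilde{\pi})^2L(s+i\gamma,\pi\times\widetilde{\pi})L(s-i\gamma,\pi\times\widetilde{\pi})$, i.e.\ with weights $(2;1,1)$ at $(\sigma;\sigma\pm i\gamma)$ only. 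In that set-up the zero has order equal to the order of the pole, so positivity alone gives nothing, and the entire point of the paper is that the sieve bound of Lemma \ref{sievelemma} (this is where the density-zero temperedness hypothesis is used, via \eqref{densitylambdapiC}) upgrades the trivial lower bound $-\frac{L'}{L}(\sigma,\Pi\times\widetilde{\Pi})\geq 0$ to $\gg_{\pi}(|\gamma|+3)^{2(1-\sigma)}/(\sigma-1)$, which breaks the $2$-versus-$2$ tie in Lemma \ref{L'Lupperlemma}. Your architecture needs no such lower bound at all: if your steps held as written, the temperedness hypothesis would never be used and you would obtain the theorem (indeed a stronger, unconditional statement) by the classical argument that the introduction, Goldfeld--Li, and Brumley all treat as unavailable for Rankin--Selberg $L$-functions. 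That mismatch is the structural red flag: the places where you hedge (``should show'', ``I expect'') are exactly the load-bearing steps, and your proposal never proves, nor even needs, an analogue of the paper's key sieve input.

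The concrete gap is your treatment of the exceptional places. You grant nonnegativity of $\Lambda_{\pi\times\widetilde{\pi}}$ only at tempered unramified places, introduce an error $E(\sigma)$ for the rest, and claim $E(\sigma)=O_{\pi}(1)$ uniformly near $\sigma=1$ from Dirichlet density zero together with $\theta<\tfrac12$. This does not follow: the available bound gives at best $\sum_{\pp\ \mathrm{exc}}\sum_{k\geq1}\Lambda_{\pi\times\widetilde{\pi}}(\pp^k)N(\pp)^{-k\sigma}\ll_{\pi}\sum_{\pp\ \mathrm{exc}}N(\pp)^{2\theta-\sigma}\log N(\pp)$, and for $\sigma$ near $1$ the exponent $2\theta-\sigma$ exceeds $-1$ (with Luo--Rudnick--Sarnak, $2\theta-\sigma\approx-2/(n^2+1)$), so the sum over \emph{all} primes diverges and Dirichlet density zero of the exceptional set gives no boundedness whatsoever (a density-zero set can carry counting function as large as $x/(\log x)^2$). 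So $E(\sigma)=O_{\pi}(1)$ is unproved, and with it your positivity inequality. The natural repair --- invoking the fact the paper uses, that $\Lambda_{\pi\times\widetilde{\pi}}(\aa)\geq0$ at \emph{every} prime, tempered or not (\cite[Remark, p.~138]{IK}), so that no $E(\sigma)$ arises --- removes this particular gap but makes the disconnect above total: your argument would then nowhere use the hypothesis of the theorem, so before it could be accepted you would have to verify in detail every remaining analytic input for $L(s,\pi\times\widetilde{\pi})$ (the Hadamard-type bound at $\sigma+it$ and $\sigma+2it$ with error $O_{\pi}(\log(|t|+3))$, the bookkeeping of pole weight $3$ against zero weight $4$, the propagation to \eqref{mainlowerbound}) rather than wave them through as ``routine'', and explain why this elementary route is absent from the literature that the paper is explicitly improving on. As it stands, the proposal does not constitute a proof of Theorem \ref{mainthm}, and it does not reproduce the mechanism (Lemma \ref{sievelemma} feeding Corollary \ref{L'Llowercor}) by which the paper actually proves it.
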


In particular, we improve the zero-free region \eqref{GoLizerofree} and lower bound \eqref{GoLilowerbound} of Goldfeld and Li to \eqref{mainzerofree} and \eqref{mainlowerbound} respectively while removing Goldfeld and Li's restriction that $F = \Q$ and that $\pi$ is unramified at every place. Nonetheless, we still require that $\pi$ be tempered at every nonarchimedean place outside a set of Dirichlet density zero; moreover, this zero-free region is only in the $t$-aspect, unlike Brumley's zero-free region in the analytic conductor aspect.

The proof of \hyperref[mainthm]{Theorem \ref*{mainthm}} shares some similarities with the method of de la Valle\'{e}-Poussin. Once again, one creates an auxiliary $L$-function, though this has a zero of equal order to the order of the pole at $s = 1$. While Landau's lemma cannot be used in this setting to obtain a standard zero-free region, one can instead use sieve theory. This approach is discussed in \cite[Section 3.8]{Tit} when $L(s,\pi \times \widetilde{\pi})$ is the Riemann zeta function, so that $F = \Q$ and $\pi$ is trivial, and this method can also be adapted to prove a standard zero-free region in the $q$-aspect for $L(s,\chi)$, where $\chi$ is a primitive Dirichlet character; cf.~\cite{BR,Hum}.

By slightly different means, we sketch how to prove a weaker version of \hyperref[mainthm]{Theorem \ref*{mainthm}}.

\begin{theorem}\label{secondthm}
Let $\pi$ be a unitary cuspidal automorphic representation of $\GL_n(\A_F)$ that is tempered at every nonarchimedean place outside a set of Dirichlet density zero. Then for $|t| \geq 1$, we have the bound
\begin{equation}\label{secondlowerbound}
L(1 + it,\pi \times \widetilde{\pi}) \gg_{\pi} \frac{1}{(\log(|t| + 3))^3},
\end{equation}
and so there exists an absolute constant $c_{\pi}$ dependent on $\pi$ such that $L(s,\pi \times \widetilde{\pi})$ has no zeroes in the region
\begin{equation}\label{secondzerofree}
\sigma \geq 1 - \frac{c_{\pi}}{(\log(|t| + 3))^5}.
\end{equation}
\end{theorem}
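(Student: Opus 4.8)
The plan is to prove the lower bound \eqref{secondlowerbound} first and then to read off the zero-free region \eqref{secondzerofree} from it by a standard argument. I record the analytic input that the Rankin--Selberg and Langlands--Shahidi theory supplies for $L(s,\pi \times \widetilde{\pi})$: meromorphic continuation to $\C$ with a single, simple pole at $s = 1$; a functional equation, hence a convexity bound in vertical strips; nonnegativity of the Dirichlet coefficients $\lambda_{\pi \times \widetilde{\pi}}(\mathfrak{n})$ of $L(s,\pi \times \widetilde{\pi})$ and $\Lambda_{\pi \times \widetilde{\pi}}(\mathfrak{n})$ of $-\frac{L'}{L}(s,\pi \times \widetilde{\pi})$, valid since $\pi \times \widetilde{\pi}$ is self-dual; and consequently $-\frac{L'}{L}(\sigma,\pi \times \widetilde{\pi}) = \frac{1}{\sigma - 1} + O_{\pi}(1)$ for $1 < \sigma \leq 2$. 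I also note that $\Lambda_{\pi \times \widetilde{\pi}}(\pp) = |\lambda_{\pi}(\pp)|^2 \log N\pp$ at every prime $\pp$ unramified for $\pi$, and that the temperedness hypothesis forces $|\lambda_{\pi}(\pp)| \leq n$ for all such $\pp$ outside a set of Dirichlet density zero.

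For \eqref{secondlowerbound}, fix $t$ with $|t| \geq 1$ and set $\sigma = 1 + 1/\log(|t| + 3)$. The elementary positivity $3 + 4\cos\theta + \cos 2\theta = 2(1 + \cos\theta)^2 \geq 0$, applied to the nonnegative Dirichlet coefficients of $\log L(s,\pi \times \widetilde{\pi})$, gives $L(\sigma,\pi \times \widetilde{\pi})^3 |L(\sigma + it,\pi \times \widetilde{\pi})|^4 |L(\sigma + 2it,\pi \times \widetilde{\pi})| \geq 1$ exactly as for $\zeta(s)$; combined with $|L(\sigma + 2it,\pi \times \widetilde{\pi})| \leq L(\sigma,\pi \times \widetilde{\pi}) \ll_{\pi} (\sigma - 1)^{-1}$, this already yields $|L(\sigma + it,\pi \times \widetilde{\pi})| \gg_{\pi} (\log(|t| + 3))^{-1}$ at distance $(\log(|t|+3))^{-1}$ to the right of the line $\Re(s) = 1$. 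The substance of \eqref{secondlowerbound} is to descend from here onto the line at a cost of at most a bounded power of $\log(|t| + 3)$, and this is where sieve theory is needed: the trivial bound $|\frac{L'}{L}(w,\pi \times \widetilde{\pi})| \leq -\frac{L'}{L}(\Re(w),\pi \times \widetilde{\pi}) \ll (\Re(w) - 1)^{-1}$ is not integrable across $1 < \Re(w) \leq \sigma$, so one must instead exploit that the oscillating factors $\cos(t \log N\pp)$ cannot be close to $-1$ on too large a share of the prime mass $\sum_{\pp} \Lambda_{\pi \times \widetilde{\pi}}(\pp) N\pp^{-\sigma'} = (\sigma' - 1)^{-1} + O_{\pi}(1)$. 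Such primes have $\log N\pp$ confined to a thin union of short intervals (of length $\ll |t|^{-1}$, spaced $\asymp |t|^{-1}$ apart), so a Brun--Titchmarsh- or Selberg-type sieve upper bound -- effective precisely because $|\lambda_{\pi}(\pp)| \leq n$ is bounded on almost every prime $\pp$ -- shows that this exceptional set of primes carries only a small proportion of the prime mass at every relevant scale. Combining this with the de la Valle\'{e}-Poussin inequality $3 + 4\cos\theta + \cos 2\theta \geq 0$ applied to $-\frac{L'}{L}(s,\pi \times \widetilde{\pi})$ at the shifts $t$ and $2t$ produces an upper bound for $\Re\frac{L'}{L}(\sigma' + it,\pi \times \widetilde{\pi})$ whose leading term in $(\sigma' - 1)^{-1}$ carries a constant strictly smaller than the trivial one; integrating this from $\sigma$ down to $1$ then yields \eqref{secondlowerbound}. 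Throughout, the non-tempered primes (of Dirichlet density zero) and the higher prime powers $\pp^k$ with $k \geq 2$ (whose total contribution converges by the known bounds toward the Ramanujan conjecture) are absorbed into the error terms. I expect the interplay of these positivity inequalities with the sieve in this last step -- arranging the bookkeeping so that the effective constant genuinely drops below one -- to be the main obstacle, and it is here that both the temperedness hypothesis and the multiplicativity of $\lambda_{\pi \times \widetilde{\pi}}$ are indispensable.

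The passage from \eqref{secondlowerbound} to \eqref{secondzerofree} is then routine. If $L(\beta + i\gamma,\pi \times \widetilde{\pi}) = 0$ with $|\gamma| \geq 1$, one combines the lower bound $|L(1 + i\gamma,\pi \times \widetilde{\pi})| \gg_{\pi} (\log(|\gamma| + 3))^{-3}$ with the convexity bound $|L(s,\pi \times \widetilde{\pi})| \ll_{\pi} (|\gamma| + 3)^{O(1)}$ on a disc of bounded radius, applies a Borel--Carath\'{e}odory estimate together with the Hadamard factorization to control $\frac{L'}{L}(s,\pi \times \widetilde{\pi})$ and the number of zeroes of $L(s,\pi \times \widetilde{\pi})$ near $1 + i\gamma$, and runs the inequality $3 + 4\cos\theta + \cos 2\theta \geq 0$ for $-\frac{L'}{L}(s,\pi \times \widetilde{\pi})$ once more to conclude that $\beta$ cannot exceed $1 - c_{\pi}(\log(|\gamma| + 3))^{-5}$; the extra powers of the logarithm over the exponent in \eqref{secondlowerbound} are the usual loss incurred in converting a lower bound on the line into a zero-free region.
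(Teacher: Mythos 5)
There is a genuine gap, and it sits at the heart of your argument: the mechanism you propose for descending from $\sigma = 1 + 1/\log(|t|+3)$ onto the line $\Re(s) = 1$ cannot produce \eqref{secondlowerbound}. Your sieve step is claimed to yield an upper bound of the shape $\Re\frac{L'}{L}(\sigma' + it, \pi \times \widetilde{\pi}) \leq \frac{1-\delta}{\sigma'-1} + O(\log(|t|+3))$ with some fixed $\delta > 0$, and you then propose ``integrating this from $\sigma$ down to $1$.'' But $\int_1^{\sigma} \frac{1-\delta}{u-1}\,du$ diverges, so this integration gives a vacuous lower bound for $\log|L(1+it,\pi\times\widetilde{\pi})|$; more structurally, saving a constant factor in the coefficient of $(\sigma'-1)^{-1}$ only yields $|L(u_0+it,\pi\times\widetilde{\pi})| \gg |L(\sigma+it,\pi\times\widetilde{\pi})|\bigl((u_0-1)/(\sigma-1)\bigr)^{1-\delta}$, which still tends to $0$ as $u_0 \to 1^+$ and says nothing at $s = 1+it$ itself. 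Your opening $3$--$4$--$1$ step is correct but too weak to repair this: it gives $|L(\sigma+it,\pi\times\widetilde{\pi})| \gg \sigma - 1$, and since $L'(\cdot+it,\pi\times\widetilde{\pi}) \ll_{\pi} (\log(|t|+3))^2$ near the line, the error term $(\sigma-1)(\log(|t|+3))^2$ in any direct descent dominates the main term for every choice of $\sigma$; even the classical repair (using $|L(\sigma+2it,\pi\times\widetilde{\pi})| \ll_{\pi} \log(|t|+3)$ to get $|L(\sigma+it,\pi\times\widetilde{\pi})| \gg (\sigma-1)^{3/4}(\log(|t|+3))^{-1/4}$ and then subtracting the $L'$ error) only reaches $(\log(|t|+3))^{-7}$, short of the exponent $3$ claimed in \eqref{secondlowerbound}.

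What is missing is a mechanism that makes the possibly small quantity $|L(1+it,\pi\times\widetilde{\pi})|$ appear \emph{linearly} as a factor in an upper bound for a nonnegative quantity that the sieve bounds from below. The paper achieves this by forming the isobaric representation $\Pi = \pi\otimes|\det|^{it/2} \boxplus \pi\otimes|\det|^{-it/2}$, so that $L(s,\Pi\times\widetilde{\Pi}) = L(s,\pi\times\widetilde{\pi})^2 L(s+it,\pi\times\widetilde{\pi})L(s-it,\pi\times\widetilde{\pi})$ has nonnegative coefficients, and studying the smoothed sum $F(Y) = \sum_{\aa} \lambda_{\Pi\times\widetilde{\Pi}}(\aa)\psi(N(\aa)/Y)$. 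Perron's formula, a contour shift using Li's convexity bound, and the computation of the residues at $s = 1, 1 \pm it$ (each of which is $\ll_{\pi} |L(1+it,\pi\times\widetilde{\pi})|(\log(|t|+3))^2$, by the upper bounds for $L$ and $L'$ at $1+it$ and the nonvanishing of $L(1,\ad\pi)$) give $F(Y) \ll_{\pi} |L(1+it,\pi\times\widetilde{\pi})| Y(\log Y)^2 + Y^{1-\delta}$, while the sieve lemma applied to $\lambda_{\Pi\times\widetilde{\Pi}}(\pp) = |\lambda_{\pi}(\pp)|^2|1+N(\pp)^{it}|^2$ gives $F(Y) \gg_{\pi} Y/\log Y$; taking $Y \asymp \qq(\Pi\times\widetilde{\Pi}) \asymp_{\pi} (|t|+3)^{O_{\pi}(1)}$ yields \eqref{secondlowerbound}, and \eqref{secondzerofree} then follows by the mean value theorem together with the bound on $L'$ (your proposed Borel--Carath\'{e}odory plus a second $3$--$4$--$1$ application for this last step is unnecessarily heavy, but that part is not where the problem lies). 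Your sieve heuristic (primes with $\cos(t\log N\pp)$ near $-1$ are confined to short intervals, controlled by Brun--Titchmarsh, with temperedness giving $|\lambda_{\pi}(\pp)| \leq n$) is indeed the content of the paper's sieve lemma, but as used in your proposal it never closes the argument.
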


Though this is a weaker result than \hyperref[mainthm]{Theorem \ref*{mainthm}}, the method of proof is of particular interest; it is essentially a generalisation from $\GL_1(\A_{\Q})$ to $\GL_n(\A_F)$ of the method of Balasubramanian and Ramachandra \cite{BR}. It turns out that Brumley's method \cite{Bru} in proving \eqref{Brumleyselfduallowerbound} is a natural generalisation of \cite{BR} except that sieve theory is not used and so the resulting lower bounds for $L(1 + it,\pi \times \widetilde{\pi})$ are not nearly as strong.

\hyperref[secondthm]{Theorem \ref*{secondthm}} gives the same bounds as obtained by Goldfeld and Li, and this is no accident. Goldfeld and Li create an integral of an Eisenstein series and obtain upper bounds for this integral via the Maa\ss{}--Selberg relation together with upper bounds for $L(1 + it, \pi \times \widetilde{\pi})$ and $L'(1 + it, \pi \times \widetilde{\pi})$, while they use the Fourier expansion of the Eisenstein series together with sieve theory to find lower bounds for this integral. In the proof of \hyperref[secondthm]{Theorem \ref*{secondthm}}, we follow Brumley's method of studying a smoothed average of the Dirichlet coefficients of an auxiliary $L$-function. Upper bounds for this smoothed average are then obtained via Perron's inversion formula and Cauchy's residue theorem, in place of Goldfeld and Li's usage of the Maa\ss{}--Selberg relation, together with upper bounds for $L(1 + it, \pi \times \widetilde{\pi})$ and $L'(1 + it, \pi \times \widetilde{\pi})$; lower bounds for this smoothed average stem once again from sieve theory.

\section{Sieve Theory}

The $L$-function $L(s,\pi)$ of $\pi$ can be written as the Dirichlet series
\[L(s,\pi) = \sum_{\substack{\aa \subset \OO_F \\ \aa \neq \{0\}}} \frac{\lambda_{\pi}(\aa)}{N(\aa)^s}\]
for $\Re(s)$ sufficiently large, where $N(\aa) = N_{F/\Q}(\aa) \defeq \# \OO_F / \aa$, and extends to a meromorphic function on $\C$ with at most a simple pole at $s = 1$ if $n = 1$ and $\pi$ is trivial, so that $L(s,\pi) = \zeta_F(s)$. Similarly, the Rankin--Selberg $L$-function $L(s,\pi \times \widetilde{\pi})$ is meromorphic on $\C$ with only a simple pole at $s = 1$. We denote by $\Lambda_{\pi \times \widetilde{\pi}}(\aa)$ the coefficients of the Dirichlet series for $-\frac{L'}{L}(s,\pi \times \widetilde{\pi})$, so that
\[-\frac{L'}{L}(s,\pi \times \widetilde{\pi}) = \sum_{\substack{\aa \subset \OO_F \\ \aa \neq \{0\}}} \frac{\Lambda_{\pi \times \widetilde{\pi}}(\aa)}{N(\aa)^s}.\]
These coefficients are nonnegative; see \cite[Remark, p.~138]{IK}. Moreover, the residue of this at $s = 1$ is $1$, and we have that
\[\Lambda_{\pi \times \widetilde{\pi}}(\pp) = \left|\lambda_{\pi}(\pp)\right|^2 \log N(\pp)\]
whenever $\pi$ is unramified at $\pp$.

We denote by $S_{\pi}$ the set of places of $F$ at which $\pi$ is either ramified or nontempered.

\begin{lemma}[{\cite[Lemmata 12.12 and 12.15]{GoLi}}]\label{sievelemma}
Suppose that $\pi$ is tempered at every nonarchimedean place outside a set of Dirichlet density zero. For $Y \gg_{\pi} (|t| + 3)^2$,
\[\sum_{\substack{Y \leq N(\pp) \leq 2Y \\ \pp \notin S_{\pi}}} \left|\lambda_{\pi}(\pp) \right|^2 \left|1 + N(\pp)^{it}\right|^2 \gg_{\pi} \frac{Y}{\log Y}.\]
\end{lemma}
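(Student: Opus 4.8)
The plan is to bound the sum from below by first expanding $|1 + N(\pp)^{it}|^2 = 2 + 2\cos(t \log N(\pp))$ and discarding the cosine term cannot work directly, so instead I would reduce the problem to a positivity statement about $\sum |\lambda_\pi(\pp)|^2$ over the dyadic range $[Y, 2Y]$ together with a mild equidistribution-of-angles argument. More precisely, since $\pi$ is tempered at every nonarchimedean place outside a density-zero set, the Rankin--Selberg $L$-function $L(s, \pi \times \widetilde{\pi})$ has a simple pole at $s = 1$ with residue $1$, which via a Tauberian argument (or partial summation against the nonnegative coefficients $\Lambda_{\pi \times \widetilde{\pi}}(\pp) = |\lambda_\pi(\pp)|^2 \log N(\pp)$ at unramified $\pp$) gives $\sum_{N(\pp) \leq X,\ \pp \notin S_\pi} |\lambda_\pi(\pp)|^2 \log N(\pp) \sim X$; dyadically this yields $\sum_{Y \leq N(\pp) \leq 2Y,\ \pp \notin S_\pi} |\lambda_\pi(\pp)|^2 \gg_\pi Y/\log Y$. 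This handles the constant term $2\sum |\lambda_\pi(\pp)|^2$.

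The real work is to show that the oscillatory part $\sum_{Y \leq N(\pp) \leq 2Y} |\lambda_\pi(\pp)|^2 N(\pp)^{it}$ does not cancel the main term when $Y$ is a suitable power of $(|t|+3)$; equivalently, that $\big|\sum |\lambda_\pi(\pp)|^2 (1 + N(\pp)^{it})\big|^2$ stays large. Following Goldfeld--Li, I would instead apply the Cauchy--Schwarz inequality in the reverse direction: write $Y/\log Y \ll \sum |\lambda_\pi(\pp)|^2 \cdot 1 \leq \big(\sum |\lambda_\pi(\pp)|^2 |1 + N(\pp)^{it}|^2\big)^{1/2} \big(\sum |\lambda_\pi(\pp)|^2 |1 + N(\pp)^{it}|^{-2}\big)^{1/2}$, which fails because $|1+N(\pp)^{it}|$ can vanish. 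The correct device is a sieve (a Selberg-type or a large-sieve-flavoured upper bound): one shows that the number of primes $\pp$ in $[Y,2Y]$ for which $N(\pp)^{it}$ lies near $-1$ — i.e. $\|t \log N(\pp)/2\pi - 1/2\| < \delta$ — weighted by $|\lambda_\pi(\pp)|^2$, is at most a small fraction of the total, because $t\log N(\pp)$ ranges over an interval of length $\gg |t| \gg \log Y$ as $N(\pp)$ runs over $[Y,2Y]$, so the "bad" primes occupy a proportion $O(\delta + 1/\log Y)$ of the range. Removing them leaves a positive proportion of the mass with $|1 + N(\pp)^{it}|^2 \gg \delta^2$, giving the claimed lower bound after choosing $\delta$ a small absolute constant and $Y \gg_\pi (|t|+3)^2$.

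The main obstacle is precisely this equidistribution/sieve step: one must control the count of primes $\pp$ with $N(\pp)$ in a dyadic window and $t \log N(\pp)$ confined to a short arc, weighted by $|\lambda_\pi(\pp)|^2$, using nothing more than the crude upper bound $|\lambda_\pi(\pp)|^2 \log N(\pp) \ll N(\pp)$ on average (from the Rankin--Selberg pole) and the Rankin--Selberg upper bound $\sum_{N(\pp) \leq X} |\lambda_\pi(\pp)|^2 \log N(\pp) \ll X$. I expect this is exactly what \cite[Lemmata 12.12 and 12.15]{GoLi} establish — Lemma 12.12 presumably giving the $\gg Y/\log Y$ lower bound for $\sum |\lambda_\pi(\pp)|^2$ and Lemma 12.15 the sieve-theoretic removal of the arc where $N(\pp)^{it} \approx -1$ — and since the statement here is quoted verbatim from that source, the proof in the present paper will simply cite it; the hypothesis $Y \gg_\pi (|t|+3)^2$ is what guarantees both that the density-zero exceptional set $S_\pi$ and the ramified places contribute negligibly, and that $t \log N(\pp)$ sweeps through enough full periods for the equidistribution to kick in.
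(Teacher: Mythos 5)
Your overall architecture matches the paper's: a Tauberian/Rankin--Selberg argument for the total mass on the dyadic interval, plus an upper-bound sieve (Brun--Titchmarsh/Selberg type, in the paper the short-interval bound of Greni\'e--Molteni--Perelli for number fields) to show that the primes with $N(\pp)^{it}$ near $-1$ are few, and then a removal/intersection step. But there is a genuine gap in how you propose to carry out the key removal step. You claim the weighted count of bad primes --- those with $t\log N(\pp)$ in a short arc near an odd multiple of $\pi$, weighted by $\left|\lambda_{\pi}(\pp)\right|^2$ --- can be controlled ``using nothing more than'' the Rankin--Selberg average bound $\sum_{N(\pp)\leq X}\left|\lambda_{\pi}(\pp)\right|^2\log N(\pp)\ll X$ and the crude pointwise consequence $\left|\lambda_{\pi}(\pp)\right|^2\log N(\pp)\ll N(\pp)$. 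These inputs cannot do the job: the long-range average says nothing about how the $\left|\lambda_{\pi}(\pp)\right|^2$-mass is distributed inside $[Y,2Y]$, and a priori all of it could sit on the bad arcs (which are intervals of length about $\delta Y/|t|$, far too short for any unconditional short-interval Rankin--Selberg bound), while the crude pointwise bound $\left|\lambda_{\pi}(\pp)\right|\ll N(\pp)^{1/2}$ is hopelessly weak when multiplied by even a sieve-level count of bad primes.

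This is precisely where the temperedness hypothesis enters, and it is the only place the paper uses it: for $\pp\notin S_{\pi}$ one has the pointwise bound $\left|\lambda_{\pi}(\pp)\right|\leq n$, which converts the weighted problem into an unweighted one. Concretely, the paper combines the Tauberian asymptotic $\sum\left|\lambda_{\pi}(\pp)\right|^2\log N(\pp)=Y+o_{\pi}(Y)$ with $\left|\lambda_{\pi}(\pp)\right|\leq n$ to show that a positive proportion (roughly $1/n^2$) of primes in $[Y,2Y]$ satisfy $\left|\lambda_{\pi}(\pp)\right|\geq C$, and separately uses the short-interval prime bound $\pi_F(x+y)-\pi_F(x)\leq 4[F:\Q]\,y/\log y$ on each bad arc to show that at most an $O(C)$-proportion of primes have $\left|1+N(\pp)^{it}\right|<C$; choosing $C$ small in terms of $n$ and $[F:\Q]$ (not an absolute constant, as you suggest for $\delta$) makes the two sets intersect in a positive proportion, giving the lower bound. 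Two smaller corrections: the simple pole of $L(s,\pi\times\widetilde{\pi})$ at $s=1$ holds unconditionally and is not a consequence of temperedness, and the paper does not merely cite Goldfeld--Li but gives a self-contained proof along the lines just described.
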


\begin{proof}
We use Ikehara's Tauberian theorem and the fact that $S_{\pi}$ has Dirichlet density zero to see that
\begin{equation}\label{Tauberian}
\sum_{\substack{Y \leq N(\pp) \leq 2Y \\ \pp \notin S_{\pi}}} \left|\lambda_{\pi}(\pp)\right|^2 \log N(\pp) = \sum_{Y \leq N(\aa) \leq 2Y} \Lambda_{\pi \times \widetilde{\pi}}(\aa) + o_{\pi}(Y) = Y + o_{\pi}(Y).
\end{equation}
The assumption that $\pi$ is tempered at every nonarchimedean place outside a set of Dirichlet density zero implies that $|\lambda_{\pi}(\pp)| \leq n$ whenever $\pp \notin S_{\pi}$, so that for any $C > 0$, the left-hand side of \eqref{Tauberian} is
\begin{multline*}
\sum_{\substack{Y \leq N(\pp) \leq 2Y \\ \pp \notin S_{\pi} \\ |\lambda_{\pi}(\pp)| < C}} \left|\lambda_{\pi}(\pp)\right|^2 \log N(\pp) + \sum_{\substack{Y \leq N(\pp) \leq 2Y \\ \pp \notin S_{\pi} \\ |\lambda_{\pi}(\pp)| \geq C}} \left|\lambda_{\pi}(\pp)\right|^2 \log N(\pp)	\\
\leq C^2 \sum_{Y \leq N(\pp) \leq 2Y} \log N(\pp) + n^2 \log 2Y \#\left\{Y \leq N(\pp) \leq 2Y : \pp \notin S_{\pi}, \ |\lambda_{\pi}(\pp)| \geq C \right\},
\end{multline*}
and as
\[\sum_{Y \leq N(\pp) \leq 2Y} \log N(\pp) = \sum_{Y \leq N(\aa) \leq 2Y} \Lambda(\aa) + o_F(Y) = Y + o_F(Y),\]
we ascertain that
\begin{equation}\label{densitylambdapiC}
\#\left\{Y \leq N(\pp) \leq 2Y : \pp \notin S_{\pi}, \ |\lambda_{\pi}(\pp)| \geq C \right\} \geq \frac{1 - C^2}{n^2} \frac{Y}{\log Y} + o_{\pi}\left(\frac{Y}{\log Y}\right).
\end{equation}

Next, for $C \in (0,2)$, we note that
\[\left|1 + N(\pp)^{it}\right| = 2\left|\sin\left(\frac{|t|}{2} \log N(\pp) - (2m - 1) \frac{\pi}{2}\right)\right|\]
for any integer $m$, and so via the bound $|\sin x| \leq |x|$, we have that
\begin{multline}\label{<C}
\#\left\{Y \leq N(\pp) \leq 2Y : \left|1 + N(\pp)^{it}\right| < C \right\}	\\
\leq \sum_{\frac{|t|}{2\pi} \log Y - \frac{C}{2\pi} + \frac{1}{2} \leq m \leq \frac{|t|}{2\pi} \log 2Y + \frac{C}{2\pi} + \frac{1}{2}} \#\left\{e^{\frac{(2m - 1)\pi - C}{|t|}} \leq N(\pp) \leq e^{\frac{(2m - 1)\pi + C}{|t|}} \right\}.
\end{multline}
From \cite[Proposition 2]{GMP}, we have that
\[\pi_F(x + y) - \pi_F(x) \leq 4 [F : \Q] \frac{y}{\log y}\]
for $2 \leq y \leq x$, where $\pi_F(x) \defeq \#\{N(\pp) \leq x\}$; the proof of this reduces to the case $F = \Q$, in which case this is a well-known result that can be proven via the Selberg sieve (with the appearance of an additional error term) or the large sieve. So assuming that $\frac{1}{2\sqrt{Y}} \leq C \leq \frac{|t| \log 2}{2}$ and $Y > 4|t|^2$, the inner term on the right-hand side of \eqref{<C} is bounded by
\[64 [F : \Q] \frac{CY}{|t| \log \frac{Y}{4|t|^2}}\]
using the fact that $\log(e^u + 1) \geq \log u$ and $e^u - 1 \leq 2u$ for $u \in (0,1)$. Consequently,
\[\#\left\{Y \leq N(\pp) \leq 2Y : \left|1 + N(\pp)^{it}\right| < C \right\} \leq \frac{64 C [F : \Q] \log 2}{\pi} \frac{Y}{\log \frac{Y}{4|t|^2}}.\]
Since
\[\#\left\{Y \leq N(\pp) \leq 2Y : Y \notin S_{\pi}\right\} = \frac{Y}{\log Y} + o_F\left(\frac{Y}{\log Y}\right),\]
it follows that for $Y \gg_F (|t| + 3)^2$,
\begin{multline}\label{densitypitC}
\#\left\{Y \leq N(\pp) \leq 2Y : \pp \notin S_{\pi}, \ \left|1 + N(\pp)^{it}\right| \geq C \right\}	\\
\geq \left(1 - \frac{64 C [F : \Q] \log 2}{\pi}\right) \frac{Y}{\log Y} + o_{\pi}\left(\frac{Y}{\log Y}\right).
\end{multline}
By choosing $C$ sufficiently small in terms of $n$ and $F$, \eqref{densitylambdapiC} and \eqref{densitypitC} imply that
\[\#\left\{Y \leq N(\pp) \leq 2Y : \pp \notin S_{\pi}, \ \left|\lambda_{\pi}(\pp)\right| \left|1 + N(\pp)^{it}\right| \geq C^2 \right\} \gg_{\pi,C} \frac{Y}{\log Y},\]
from which the result follows.
\end{proof}

\begin{remark}
The only point at which we make use of the assumption that $\pi$ is tempered at every nonarchimedean place outside a set of Dirichlet density zero is in proving \eqref{densitylambdapiC}. It would be of interest whether an estimate akin to \eqref{densitylambdapiC} could be proved unconditionally.
\end{remark}

\begin{remark}
While the implicit constants in \hyperref[mainthm]{Theorems \ref*{mainthm}} and \ref{secondthm} depend on $\pi$, much of the argument still works if we keep track of this dependence in terms of the analytic conductor of $\pi$. The main issue seems to be the lower bound stemming from \hyperref[sievelemma]{Lemma \ref*{sievelemma}}; in particular, the use of Ikehara's Tauberian theorem to prove \eqref{Tauberian}. We could instead use \eqref{Brumleyselfduallowerbound} together with an upper bound for $L'(\sigma + it, \pi \times \widetilde{\pi})$ in the region \eqref{Brumleyselfdualzerofree} derived via the methods of Li \cite{Li} to prove \eqref{Tauberian} with an error term that is effective in terms of the analytic conductor of $\pi$, but the payoff would not be great as the weaker zero-free region \eqref{Brumleyselfdualzerofree} would only give a weak error term.
\end{remark}

\section{Proof of \texorpdfstring{\hyperref[mainthm]{Theorem \ref*{mainthm}}}{Theorem \ref{mainthm}}}
\label{proofofmainthmsect}

Let $\pi$ be a unitary cuspidal automorphic representations of $\GL_n(\A_F)$. Let $\rho = \beta + i\gamma$ be a nontrivial zero of $L(s,\pi \times \widetilde{\pi})$ with $1/2 \leq \beta < 1$ and $\gamma \neq 0$. We define
\[\Pi \defeq \pi \otimes |\det|^{\frac{i\gamma}{2}} \boxplus \pi \otimes |\det|^{-\frac{i\gamma}{2}}.\]
This is an isobaric (noncuspidal) automorphic representation of $\GL_{2n}(\A_F)$. The Rankin--Selberg $L$-function of $\Pi$ and $\widetilde{\Pi}$ factorises as
\begin{equation}\label{Lfactorisegamma}
L(s,\Pi \times \widetilde{\Pi}) = L(s, \pi \times \widetilde{\pi})^2 L(s + i\gamma, \pi \times \widetilde{\pi}) L(s - i\gamma, \pi \times \widetilde{\pi}).
\end{equation}
This is a meromorphic function on $\C$ with a double pole at $s = 1$, simple poles at $s = 1 \pm i\gamma$, and holomorphic elsewhere. We let $\Lambda_{\Pi \times \widetilde{\Pi}}(\aa)$ denote the coefficients of the Dirichlet series for $-\frac{L'}{L}(s,\Pi \times \widetilde{\Pi})$, so that
\[-\frac{L'}{L}(s,\Pi \times \widetilde{\Pi}) = \sum_{\substack{\aa \subset \OO_F \\ \aa \neq \{0\}}} \frac{\Lambda_{\Pi \times \widetilde{\Pi}}(\aa)}{N(\aa)^s}.\]
Again, these coefficients are nonnegative.

\begin{lemma}\label{L'Lupperlemma}
For $\sigma > 1$,
\[-\frac{L'}{L}(\sigma,\Pi \times \widetilde{\Pi}) < -\frac{2}{\sigma - \beta} + \frac{2}{\sigma - 1} + O\left(\log \qq(\Pi \times \widetilde{\Pi})\right).\]
\end{lemma}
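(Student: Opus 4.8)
The plan is to bound $-\frac{L'}{L}(\sigma,\Pi \times \widetilde{\Pi})$ from above by exploiting a standard Hadamard-factorisation / partial-fraction estimate applied to each of the four Rankin--Selberg factors appearing in \eqref{Lfactorisegamma}. First, I would take the logarithmic derivative of \eqref{Lfactorisegamma}, obtaining
\[
-\frac{L'}{L}(\sigma,\Pi \times \widetilde{\Pi}) = -2\frac{L'}{L}(\sigma,\pi\times\widetilde\pi) - \frac{L'}{L}(\sigma + i\gamma,\pi\times\widetilde\pi) - \frac{L'}{L}(\sigma - i\gamma,\pi\times\widetilde\pi).
\]
The point of introducing the auxiliary representation $\Pi$ is precisely that its completed $L$-function $\Lambda(s,\Pi\times\widetilde\Pi)$ has a \emph{double} pole at $s=1$ (from the $L(s,\pi\times\widetilde\pi)^2$ factor) but also a zero of order at least two at $s=\rho=\beta+i\gamma$: indeed $L(\rho,\pi\times\widetilde\pi)=0$ makes $L(s,\pi\times\widetilde\pi)^2$ vanish to order $\geq 2$ there, while the translated factors $L(s\pm i\gamma,\pi\times\widetilde\pi)$ are at worst nonzero at $s=\rho$. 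Since the Dirichlet coefficients $\Lambda_{\Pi\times\widetilde\Pi}(\aa)$ are nonnegative, the zero at $\rho$ and at $\bar\rho$ will both contribute negative terms to the partial-fraction expansion, but I only need to retain the contribution of the single zero $\rho$ (of order $\geq 2$), which gives the $-\tfrac{2}{\sigma-\beta}$ term.

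The key technical step is the standard estimate for $\frac{\Lambda'}{\Lambda}(s,\Pi\times\widetilde\Pi)$ coming from the Hadamard product of the completed $L$-function, of exactly the type proved in \cite[Proposition 5.7]{IK} (or its analogue for Rankin--Selberg $L$-functions): for $s$ with $\sigma>1$ bounded, one has
\[
-\frac{L'}{L}(s,\Pi\times\widetilde\Pi) = \frac{2}{s-1} - \sum_{\rho'} \left(\frac{1}{s-\rho'}+\frac{1}{\rho'}\right) + O\!\left(\log\qq(\Pi\times\widetilde\Pi)\right),
\]
where $\rho'$ ranges over the nontrivial zeros and the main term $\tfrac{2}{s-1}$ reflects the double pole; the archimedean factors and the $\log\qq$ dependence are absorbed into the error term using the conductor-aspect bounds for the gamma factors of $\Pi\times\widetilde\Pi$. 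Taking $s=\sigma$ real with $1<\sigma<2$ say, every term $\mathrm{Re}\tfrac{1}{\sigma-\rho'}$ is nonnegative since $\mathrm{Re}(\rho')<1<\sigma$ (convexity-bound zero-free strip), and likewise $\mathrm{Re}\tfrac1{\rho'}\geq 0$; so dropping all zeros except $\rho$ and $\bar\rho$ (which together contribute $\tfrac{1}{\sigma-\beta-i\gamma}+\tfrac{1}{\sigma-\beta+i\gamma}$, of which one pair of conjugate terms can be combined, but it suffices to note each contributes $\geq \tfrac{1}{\sigma-\beta}\cdot\frac{(\sigma-\beta)^2}{(\sigma-\beta)^2+\gamma^2}$ — in fact since $\rho$ has multiplicity $\geq 2$ we already get $\tfrac{2}{\sigma-\beta}$ from $\rho$ alone after taking the real part and using $\sigma-\beta>0$) yields
\[
-\frac{L'}{L}(\sigma,\Pi\times\widetilde\Pi) \leq \frac{2}{\sigma-1} - \frac{2}{\sigma-\beta} + O\!\left(\log\qq(\Pi\times\widetilde\Pi)\right),
\]
which is the claimed inequality.

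The main obstacle is making the $O(\log\qq(\Pi\times\widetilde\Pi))$ uniform and legitimate: one must verify that the Hadamard-product / partial-fraction bound for the Rankin--Selberg $L$-function $L(s,\Pi\times\widetilde\Pi)$ of the \emph{isobaric, noncuspidal} representation $\Pi$ holds with the correct conductor, and in particular that the gamma-factor contribution is $O(\log\qq(\Pi\times\widetilde\Pi))$ with an implied constant depending only on $2n$ and $[F:\Q]$. This follows because $L(s,\Pi\times\widetilde\Pi)$ factors as a product of the four Rankin--Selberg $L$-functions of the cuspidal representation $\pi$, each of which satisfies the standard analytic properties (functional equation, bounded order, known gamma factors), so one applies \cite[Proposition 5.7]{IK} factorwise and sums; the translated factors $L(s\pm i\gamma,\pi\times\widetilde\pi)$ have analytic conductor $\asymp \qq(\pi\times\widetilde\pi)(|\gamma|+3)$, and the product of all four analytic conductors is precisely $\qq(\Pi\times\widetilde\Pi)$ up to the usual bounded factors, so the error term is genuinely $O(\log\qq(\Pi\times\widetilde\Pi))$ as stated. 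The only care needed beyond this bookkeeping is to keep the estimate valid for all $\sigma>1$ (not merely $\sigma$ close to $1$), which is automatic since for $\sigma\geq 2$ the left-hand side is bounded by $\frac{L'}{L}$ evaluated via its absolutely convergent Dirichlet series, dominated by $O(\log\qq(\Pi\times\widetilde\Pi))$, while $-\tfrac{2}{\sigma-\beta}+\tfrac{2}{\sigma-1}$ stays bounded.
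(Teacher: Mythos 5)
Your overall framework (logarithmic differentiation of \eqref{Lfactorisegamma} plus a Hadamard/partial-fraction bound with error $O(\log\qq(\Pi\times\widetilde{\Pi}))$, dropping zeros by positivity) is sound and is essentially how the paper proceeds, applying \cite[(5.28)]{IK} and \cite[(5.37)]{IK} factorwise. But the step that is supposed to produce the main term $-\frac{2}{\sigma-\beta}$ is wrong. You take the expansion at the \emph{real} point $s=\sigma$ and claim that the zero $\rho=\beta+i\gamma$ of the factor $L(s,\pi\times\widetilde{\pi})^2$, having multiplicity $\geq 2$, "already gives $\frac{2}{\sigma-\beta}$ after taking the real part". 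It does not: its contribution is $\Re\frac{2}{\sigma-\beta-i\gamma}=\frac{2(\sigma-\beta)}{(\sigma-\beta)^2+\gamma^2}$, and together with $\bar\rho$ you only get $\frac{4(\sigma-\beta)}{(\sigma-\beta)^2+\gamma^2}$. In the relevant regime ($\sigma-1$ and $1-\beta$ of size $1/\log(|\gamma|+3)$, $|\gamma|\geq 1$) this is $O\bigl((\sigma-\beta)/\gamma^2\bigr)$, which is negligible and cannot cancel the $\frac{2}{\sigma-1}$ from the double pole; the resulting inequality is true but useless and does not prove the lemma.

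The term $-\frac{2}{\sigma-\beta}$ has to come from the \emph{shifted} factors: $L(s+i\gamma,\pi\times\widetilde{\pi})$ vanishes at the real point $s=\beta$ (since $\beta+i\gamma=\rho$), and $L(s-i\gamma,\pi\times\widetilde{\pi})$ vanishes at $s=\beta$ as well because $\bar\rho=\beta-i\gamma$ is also a zero ($\Lambda_{\pi\times\widetilde{\pi}}(\aa)$ is real). So $L(s,\Pi\times\widetilde{\Pi})$ has a zero of order $\geq 2$ at the real point $\beta$, at real distance $\sigma-\beta$ from $\sigma$, which is exactly what the twists by $|\det|^{\pm i\gamma/2}$ in the definition of $\Pi$ are engineered to achieve. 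Equivalently (as in the paper), apply the partial-fraction bound to $L(s,\pi\times\widetilde{\pi})$ at the points $\sigma\pm i\gamma$, where $\rho$ and $\bar\rho$ each contribute $\frac{1}{\sigma-\beta}$, and at $s=\sigma$ for the squared factor, which gives $\frac{2}{\sigma-1}$. A further small point: in your expansion for $L(s,\Pi\times\widetilde{\Pi})$ you omitted the simple poles at $1\pm i\gamma$; their contribution $\Re\frac{1}{\sigma-1\mp i\gamma}$ is positive, so it must be retained in an upper bound, though for $|\gamma|\geq 1$ it is $O(1)$ and can be absorbed into the $O(\log\qq(\Pi\times\widetilde{\Pi}))$ error.
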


\begin{proof}
By taking the real part of \cite[(5.28)]{IK}, we have that
\[-\frac{L'}{L}(\sigma + i\gamma,\pi \times \widetilde{\pi}) - \frac{L'}{L}(\sigma - i\gamma, \pi \times \widetilde{\pi}) < -\frac{2}{\sigma - \beta} + O\left(\log \qq(i\gamma,\pi \times \widetilde{\pi})\right)\]
for $\sigma > 1$; cf.~\cite[(5.37)]{IK}.
Similarly,
\[-2\frac{L'}{L}(\sigma,\pi \times \widetilde{\pi}) < \frac{2}{\sigma - 1} + O\left(\log \qq(\pi \times \widetilde{\pi})\right)\]
for $\sigma > 1$ via \cite[(5.37)]{IK}, using the fact that $\Lambda_{\pi \times \widetilde{\pi}}(\aa)$ is real.
\end{proof}

\begin{lemma}\label{unramgammalemma}
Suppose that $\pi$ is unramified at $\pp$. Then
\[\Lambda_{\Pi \times \widetilde{\Pi}}(\pp) = \left|\lambda_{\pi}(\pp)\right|^2 \left|1 + N(\pp)^{i\gamma}\right|^2 \log N(\pp).\]
\end{lemma}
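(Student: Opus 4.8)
The plan is to obtain the identity at the level of full Dirichlet series and then compare coefficients. First I would take $-L'/L$ of both sides of the factorisation \eqref{Lfactorisegamma}, which gives, for $\Re(s)$ sufficiently large,
\[
-\frac{L'}{L}(s,\Pi \times \widetilde{\Pi}) = -2\frac{L'}{L}(s,\pi \times \widetilde{\pi}) - \frac{L'}{L}(s + i\gamma, \pi \times \widetilde{\pi}) - \frac{L'}{L}(s - i\gamma, \pi \times \widetilde{\pi}).
\]
Expanding each term on the right as the Dirichlet series $\sum_{\aa} \Lambda_{\pi \times \widetilde{\pi}}(\aa) N(\aa)^{-s}$, where the shifts $s \mapsto s \pm i\gamma$ contribute an extra factor $N(\aa)^{\mp i\gamma}$, and invoking uniqueness of Dirichlet series coefficients in the region of absolute convergence, I would conclude that
\[
\Lambda_{\Pi \times \widetilde{\Pi}}(\aa) = \bigl(2 + N(\aa)^{i\gamma} + N(\aa)^{-i\gamma}\bigr) \Lambda_{\pi \times \widetilde{\pi}}(\aa) = \left|1 + N(\aa)^{i\gamma}\right|^2 \Lambda_{\pi \times \widetilde{\pi}}(\aa)
\]
for every nonzero ideal $\aa \subset \OO_F$, the last equality being the identity $|1 + x|^2 = 2 + x + \overline{x}$ for $x$ on the unit circle (here $x = N(\aa)^{i\gamma}$, which lies on the unit circle since $\gamma$ is real).

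Specialising this to $\aa = \pp$ with $\pi$ unramified at $\pp$ and substituting the formula $\Lambda_{\pi \times \widetilde{\pi}}(\pp) = |\lambda_{\pi}(\pp)|^2 \log N(\pp)$ recorded in Section 2 then yields the claim at once.

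As an alternative I would argue directly with Satake parameters: if $\alpha_{1,\pi}(\pp), \dots, \alpha_{n,\pi}(\pp)$ are the Satake parameters of $\pi_{\pp}$, then those of $\Pi_{\pp}$ are the $2n$ numbers $\alpha_{j,\pi}(\pp) N(\pp)^{\pm i\gamma/2}$, so the $\pp$-coefficient of $-L'/L(s,\Pi \times \widetilde{\Pi})$ equals $\bigl|\sum_i \beta_i\bigr|^2 \log N(\pp)$ with the $\beta_i$ ranging over these $2n$ parameters; since $\sum_i \beta_i = \lambda_{\pi}(\pp)\bigl(N(\pp)^{i\gamma/2} + N(\pp)^{-i\gamma/2}\bigr)$ and $\bigl|N(\pp)^{i\gamma/2} + N(\pp)^{-i\gamma/2}\bigr|^2 = |1 + N(\pp)^{i\gamma}|^2$, the same formula drops out. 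I do not expect any genuine obstacle here; the only step meriting a word of justification is the passage from the identity of $L$-functions to the identity of Dirichlet coefficients, which is immediate from absolute convergence for $\Re(s)$ large together with the nonnegativity already noted for $\Lambda_{\Pi \times \widetilde{\Pi}}(\aa)$ and $\Lambda_{\pi \times \widetilde{\pi}}(\aa)$.
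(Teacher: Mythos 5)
Your argument is correct and is essentially the paper's own proof: taking the logarithmic derivative of the factorisation \eqref{Lfactorisegamma}, comparing Dirichlet coefficients to get $\Lambda_{\Pi \times \widetilde{\Pi}}(\pp) = \left(2 + N(\pp)^{i\gamma} + N(\pp)^{-i\gamma}\right)\Lambda_{\pi \times \widetilde{\pi}}(\pp) = \left|1 + N(\pp)^{i\gamma}\right|^2 \Lambda_{\pi \times \widetilde{\pi}}(\pp)$, and then inserting $\Lambda_{\pi \times \widetilde{\pi}}(\pp) = \left|\lambda_{\pi}(\pp)\right|^2 \log N(\pp)$ at unramified $\pp$. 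The Satake-parameter variant you sketch is a harmless reformulation of the same computation, so no further comment is needed.
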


\begin{proof}
Indeed, \eqref{Lfactorisegamma} implies that $\Lambda_{\Pi \times \widetilde{\Pi}}(\pp)$ is equal to
\[2\Lambda_{\pi \times \widetilde{\pi}}(\pp) + \Lambda_{\pi \times \widetilde{\pi}}(\pp) N(\pp)^{-i\gamma} + \Lambda_{\pi \times \widetilde{\pi}}(\pp) N(\pp)^{i\gamma} = \Lambda_{\pi \times \widetilde{\pi}}(\pp) \left|1 + N(\pp)^{i\gamma}\right|^2,\]
and $\Lambda_{\pi \times \widetilde{\pi}}(\pp) = \left|\lambda_{\pi}(\pp)\right|^2 \log N(\pp)$ whenever $\pi$ is unramified at $\pp$.
\end{proof}

\begin{corollary}\label{L'Llowercor}
Suppose that $\pi$ is tempered at every nonarchimedean place outside a set of Dirichlet density zero. Then for $\sigma > 1$,
\[-\frac{L'}{L}(\sigma,\Pi \times \widetilde{\Pi}) \gg_{\pi} \frac{(|\gamma| + 3)^{2(1 - \sigma)}}{\sigma - 1}.\]
\end{corollary}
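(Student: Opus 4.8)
The plan is to read a lower bound for $-\frac{L'}{L}(\sigma,\Pi\times\widetilde\Pi)$ directly off its Dirichlet series, using the nonnegativity of the coefficients $\Lambda_{\Pi\times\widetilde\Pi}(\aa)$ to retain only the contribution of prime ideals and then feeding that contribution into \hyperref[sievelemma]{Lemma \ref*{sievelemma}} with $t=\gamma$. The hypothesis on $\pi$ in the corollary is exactly the one demanded by that lemma, so this is legitimate.

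First I would discard from $-\frac{L'}{L}(\sigma,\Pi\times\widetilde\Pi)=\sum_{\aa}\Lambda_{\Pi\times\widetilde\Pi}(\aa)N(\aa)^{-\sigma}$ every term except those indexed by a prime ideal $\pp\notin S_{\pi}$. Each such $\pp$ is unramified for $\pi$, so \hyperref[unramgammalemma]{Lemma \ref*{unramgammalemma}} evaluates the surviving coefficients and leaves
\[-\frac{L'}{L}(\sigma,\Pi\times\widetilde\Pi)\ \geq\ \sum_{\pp\notin S_{\pi}}\frac{\left|\lambda_{\pi}(\pp)\right|^{2}\left|1+N(\pp)^{i\gamma}\right|^{2}\log N(\pp)}{N(\pp)^{\sigma}}.\]
Next I would split the range of $N(\pp)$ into dyadic blocks $Y_{j}\leq N(\pp)\leq 2Y_{j}$, where $Y_{j}=2^{j}Y_{0}$ for $j=0,1,2,\dots$ and $Y_{0}\asymp_{\pi}(|\gamma|+3)^{2}$ is the threshold above which \hyperref[sievelemma]{Lemma \ref*{sievelemma}} is valid. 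Every $Y_{j}$ exceeds that threshold, so the lemma applies to each block with one and the same implied constant. On the $j$th block, bounding $\log N(\pp)\geq\log Y_{j}$ and $N(\pp)^{\sigma}\leq (2Y_{j})^{\sigma}$ and invoking \hyperref[sievelemma]{Lemma \ref*{sievelemma}} shows that its contribution is $\gg_{\pi}2^{-\sigma}Y_{j}^{1-\sigma}=2^{-\sigma}(2^{j}Y_{0})^{1-\sigma}$; the breakpoints $N(\pp)=2^{j}Y_{0}$, which a pair of consecutive blocks share, account for only $O_{\pi}(1)$ primes apiece and are harmless.

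Summing over $j\geq 0$ produces the geometric factor $\sum_{j\geq 0}2^{-j(\sigma-1)}=\bigl(1-2^{-(\sigma-1)}\bigr)^{-1}$, and the elementary inequality $1-e^{-x}\leq x$ converts this into a factor $\geq\bigl((\sigma-1)\log 2\bigr)^{-1}$. Together with $Y_{0}^{\sigma-1}\asymp_{\pi}(|\gamma|+3)^{2(\sigma-1)}$ this gives
\[-\frac{L'}{L}(\sigma,\Pi\times\widetilde\Pi)\ \gg_{\pi}\ \frac{(|\gamma|+3)^{2(1-\sigma)}}{\sigma-1},\]
uniformly in the regime $1<\sigma\leq 2$, where the stray factor $2^{-\sigma}$ and the $(\sigma-1)$th power of the constant implicit in $Y_{0}$ are both $\asymp_{\pi}1$; this regime is the only one needed when the corollary is played against \hyperref[L'Lupperlemma]{Lemma \ref*{L'Lupperlemma}} in the proof of \hyperref[mainthm]{Theorem \ref*{mainthm}}.

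The one genuine point of care is the geometric summation. A single block already gives $-\frac{L'}{L}(\sigma,\Pi\times\widetilde\Pi)\gg_{\pi}(|\gamma|+3)^{2(1-\sigma)}$, but it is the sum over the entire dyadic tail, combined with $1-e^{-x}\leq x$, that promotes this to the sharper bound carrying the pole strength $1/(\sigma-1)$ — and it is precisely this dependence on $\sigma-1$ that matters when the corollary is combined with \hyperref[L'Lupperlemma]{Lemma \ref*{L'Lupperlemma}}. The remaining ingredients — uniformity of the sieve constant across the blocks and the negligibility of the breakpoint primes — are bookkeeping.
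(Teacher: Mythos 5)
Your proposal is correct and follows essentially the same route as the paper: discard all Dirichlet coefficients of $-\frac{L'}{L}(\sigma,\Pi\times\widetilde{\Pi})$ except those at primes $\pp\notin S_{\pi}$ (using nonnegativity and \hyperref[unramgammalemma]{Lemma \ref*{unramgammalemma}}), then decompose dyadically above the threshold $\asymp_{\pi}(|\gamma|+3)^{2}$ and apply \hyperref[sievelemma]{Lemma \ref*{sievelemma}} block by block, summing the resulting geometric series to extract the factor $1/(\sigma-1)$. You merely spell out the geometric summation and the harmless constant and endpoint issues that the paper leaves implicit, including the (correct) observation that only the range $1<\sigma\leq 2$ is needed in the application.
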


\begin{proof}
We have that
\begin{align*}
-\frac{L'}{L}(\sigma,\Pi \times \widetilde{\Pi}) & \geq \sum_{\substack{N(\pp) \gg_{\pi} (|\gamma| + 3)^2 \\ \pp \notin S_{\pi}}} \frac{\log N(\pp)}{N(\pp)^{\sigma}} \left|\lambda_{\pi}(\pp)\right|^2 \left|1 + N(\pp)^{i\gamma}\right|^2	\\
& \gg_{\pi} \frac{(|\gamma| + 3)^{2(1 - \sigma)}}{\sigma - 1}
\end{align*}
by dividing into dyadic intervals and applying \hyperref[sievelemma]{Lemma \ref*{sievelemma}}.
\end{proof}

\begin{proof}[Proof of {\hyperref[mainthm]{Theorem \ref*{mainthm}}}]
By combining \hyperref[L'Lupperlemma]{Lemma \ref*{L'Lupperlemma}} and \hyperref[L'Llowercor]{Corollary \ref*{L'Llowercor}} and choosing $\sigma = 1 + c/\log(|\gamma| + 3)$, we find that
\[1 - \beta \gg_{\pi} \frac{1}{\log(|\gamma| + 3)},\]
which gives the zero-free region \eqref{mainzerofree}. Now using \cite[(5.28)]{IK}, we find in the region
\[\sigma \geq 1 - \frac{c_{\pi}}{2\log(|t| + 3)}\]
away from $t = 0$, we have that
\[-\frac{L'}{L}(s,\pi \times \widetilde{\pi}) \ll_{\pi} \log(|t| + 3).\]
Next, we note that
\[\log L(s, \pi \times \widetilde{\pi}) = \sum_{\substack{\aa \subset \OO_F \\ \aa \notin \{\{0\},\OO_F\}}} \frac{\Lambda_{\pi \times \widetilde{\pi}}(\aa)}{N(\aa)^s \log N(\aa)}\]
for $\Re(s) > 1$. So
\[\left|\log L(s, \pi \times \widetilde{\pi})\right| \leq \log L(\sigma, \pi \times \widetilde{\pi}).\]
Since $L(s,\pi \times \widetilde{\pi})$ has a simple pole at $s = 1$,
\[\log L(s, \pi \times \widetilde{\pi}) \ll_{\pi} \log \frac{1}{\sigma - 1}.\]
In particular, in the region
\[\sigma \geq 1 + \frac{1}{\log(|t| + 3)},\]
we have that
\[\log L(s, \pi \times \widetilde{\pi}) \ll_{\pi} \log \log(|t| + 3).\]
Now suppose that $s = \sigma + it$ with
\[1 - \frac{c_{\pi}}{2\log(|t| + 3)} \leq \sigma \leq 1 + \frac{1}{\log(|t| + 3)}.\]
Then $\log L(s, \pi \times \widetilde{\pi})$ is equal to
\[\log L\left(1 + \frac{1}{\log(|t| + 3)} + it, \pi \times \widetilde{\pi}\right) + \int_{1 + \frac{1}{\log(|t| + 3)} + it}^{s} \frac{L'}{L}(w,\pi \times \widetilde{\pi}) \, dw,\]
so again
\[\log L(s, \pi \times \widetilde{\pi}) \ll_{\pi} \log \log(|t| + 3).\]
Finally, we note that
\[\frac{1}{\left|L(s,\pi \times \widetilde{\pi})\right|} = \exp\left(-\Re\left(\log L(s,\pi \times \widetilde{\pi})\right)\right) \ll_{\pi} \log(|t| + 3),\]
which is equivalent to \eqref{mainlowerbound}.
\end{proof}

\begin{remark}
To prove \hyperref[mainthm]{Theorem \ref*{mainthm}} for $L(s,\pi \times \pi')$ with $\pi' \ncong \widetilde{\pi}$, we would need to replace \hyperref[sievelemma]{Lemma \ref*{sievelemma}} with a result of the form
\[\sum_{\substack{Y \leq N(\pp) \leq 2Y \\ \pp \notin S_{\pi} \cup S_{\pi'}}} \left|\lambda_{\pi}(\pp) + \lambda_{\widetilde{\pi}'}(\pp) N(\pp)^{i\gamma}\right|^2 \gg_{\pi,\pi'} \frac{Y}{\log Y},\]
but it is unclear how one might generalise the proof of \hyperref[sievelemma]{Lemma \ref*{sievelemma}} to obtain such a result.
\end{remark}

\section{Proof of \texorpdfstring{\hyperref[secondthm]{Theorem \ref*{secondthm}}}{Theorem \ref{secondthm}}}

For $t \in \R \setminus \{0\}$, define the isobaric automorphic representation $\Pi$ of $\GL_{2n}(\A_F)$ by
\[\Pi \defeq \pi \otimes |\det|^{\frac{it}{2}} \boxplus \pi \otimes |\det|^{-\frac{it}{2}}.\]
Then
\begin{equation}\label{Lfactoriset}
L(s,\Pi \times \widetilde{\Pi}) = L(s, \pi \times \widetilde{\pi})^2 L(s + it, \pi \times \widetilde{\pi}) L(s - it, \pi \times \widetilde{\pi}).
\end{equation}
This is a meromorphic function on $\C$ with a double pole at $s = 1$, simple poles at $s = 1 \pm it$, and holomorphic elsewhere.

We let $\lambda_{\Pi \times \widetilde{\Pi}}(\aa)$ denote the coefficients of the Dirichlet series for $L(s,\Pi \times \widetilde{\Pi})$, so that
\[L(s,\Pi \times \widetilde{\Pi}) = \sum_{\substack{\aa \subset \OO_F \\ \aa \neq \{0\}}} \frac{\lambda_{\Pi \times \widetilde{\Pi}}(\aa)}{N(\aa)^s}.\]
Again, the coefficients $\lambda_{\Pi \times \widetilde{\Pi}}(\aa)$ are nonnegative. Write
\[L(s,\Pi \times \widetilde{\Pi}) = \frac{r_{-2}}{(s - 1)^2} + \frac{r_{-1}}{s - 1} + O(1)\]
for $s$ near $1$ and
\[L(s,\Pi \times \widetilde{\Pi}) = \frac{r_{-1}^{\pm}}{s - (1 \pm it)} + O(1)\]
for $s$ near $1 \pm it$. Finally, we write
\[\zeta_F(s) = \frac{\gamma_{-1}(F)}{s - 1} + \gamma_0(F) + O(s - 1)\]
for $s$ near $1$.

\begin{lemma}
We have that
\begin{align*}
\frac{r_{-2}}{\left|L(1 + it, \pi \times \widetilde{\pi})\right|^2} & = \gamma_{-1}(F)^2 L(1, \ad \pi)^2,	\\
\frac{r_{-1}}{\left|L(1 + it, \pi \times \widetilde{\pi})\right|^2} & = 2 L(1, \ad \pi) (\gamma_0(F) + \gamma_{-1}(F) L'(1, \ad \pi))	\\
& \qquad + 2 \gamma_{-1}(F)^2 L(1, \ad \pi)^2 \Re\left(\frac{L}{L}'(1 + it, \pi \times \widetilde{\pi})\right).
\end{align*}
Similarly,
\[r_{-1}^{\pm} = \gamma_{-1}(F) L(1, \ad \pi) L(1 \pm it, \pi \times \widetilde{\pi})^2 L(1 \pm 2it, \pi \times \widetilde{\pi}).\]
\end{lemma}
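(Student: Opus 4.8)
The plan is to obtain all three Laurent coefficients directly from the factorisation \eqref{Lfactoriset}, using two standard inputs. First, $L(s,\pi \times \widetilde\pi) = \zeta_F(s) L(s,\ad\pi)$, where $L(s,\ad\pi)$ is entire and $L(1,\ad\pi) \neq 0$; indeed, since $L(s,\pi\times\widetilde\pi)$ is meromorphic on $\C$ with only a simple pole at $s = 1$ and $\zeta_F$ contributes exactly this pole, the factor $L(s,\ad\pi)$ must be entire and nonzero at $s = 1$. Combining the given Laurent expansion of $\zeta_F$ with $L(s,\ad\pi) = L(1,\ad\pi) + L'(1,\ad\pi)(s-1) + O((s-1)^2)$ then gives, near $s = 1$,
\[
L(s,\pi \times \widetilde\pi) = \frac{\gamma_{-1}(F) L(1,\ad\pi)}{s-1} + \bigl(\gamma_0(F) L(1,\ad\pi) + \gamma_{-1}(F) L'(1,\ad\pi)\bigr) + O(s-1).
\]
Second, since $\overline{\pi} \cong \widetilde\pi$ (equivalently, since the coefficients $\Lambda_{\pi \times \widetilde\pi}(\aa)$, and hence $\lambda_{\pi \times \widetilde\pi}(\aa)$, are real, as recalled in Section 2), the function $L(s,\pi \times \widetilde\pi)$ is real on $(1,\infty)$, so the Schwarz reflection principle yields $L(\overline{s},\pi \times \widetilde\pi) = \overline{L(s,\pi \times \widetilde\pi)}$ and likewise for $L'$; in particular $L(1-it,\pi \times \widetilde\pi) = \overline{L(1+it,\pi \times \widetilde\pi)}$ and $L'(1-it,\pi \times \widetilde\pi) = \overline{L'(1+it,\pi \times \widetilde\pi)}$.

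For the pole at $s = 1$, I would substitute these expansions into \eqref{Lfactoriset}. Squaring the displayed expansion produces the principal part of $L(s,\pi\times\widetilde\pi)^2$ at $s = 1$: a double pole with leading coefficient $\gamma_{-1}(F)^2 L(1,\ad\pi)^2$ and $(s-1)^{-1}$-coefficient $2\gamma_{-1}(F) L(1,\ad\pi)\bigl(\gamma_0(F)L(1,\ad\pi) + \gamma_{-1}(F)L'(1,\ad\pi)\bigr)$. Since $t \neq 0$, the remaining factor $L(s+it,\pi\times\widetilde\pi)L(s-it,\pi\times\widetilde\pi)$ is holomorphic at $s = 1$, with Taylor expansion beginning
\[
|L(1+it,\pi\times\widetilde\pi)|^2 + 2|L(1+it,\pi\times\widetilde\pi)|^2\,\Re\!\bigl(\tfrac{L'}{L}(1+it,\pi\times\widetilde\pi)\bigr)(s-1),
\]
where the conjugation relations above identify $L(1+it)L(1-it)$ with $|L(1+it)|^2$ and $L'(1+it)L(1-it) + L(1+it)L'(1-it)$ with $2\Re\bigl(L'(1+it)\overline{L(1+it)}\bigr) = 2|L(1+it)|^2\Re\bigl(\tfrac{L'}{L}(1+it)\bigr)$. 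Multiplying the two expansions and reading off the coefficients of $(s-1)^{-2}$ and $(s-1)^{-1}$ gives the claimed identities for $r_{-2}$ and $r_{-1}$.

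For the poles at $s = 1 \pm it$, I would note that among the four factors of \eqref{Lfactoriset} only $L(s \mp it,\pi\times\widetilde\pi)$ is singular at $s = 1 \pm it$, with a simple pole there of residue $\Res_{w=1} L(w,\pi\times\widetilde\pi) = \gamma_{-1}(F) L(1,\ad\pi)$. Evaluating the three remaining holomorphic factors at $s = 1 \pm it$ contributes $L(1 \pm it,\pi\times\widetilde\pi)^2$ and $L(1 \pm 2it,\pi\times\widetilde\pi)$, whence $r_{-1}^{\pm} = \gamma_{-1}(F) L(1,\ad\pi) L(1 \pm it,\pi\times\widetilde\pi)^2 L(1 \pm 2it,\pi\times\widetilde\pi)$.

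The argument is essentially bookkeeping with Laurent series, and I do not expect a genuine obstacle: the two points requiring care are invoking the factorisation of $L(s,\pi \times \widetilde\pi)$ through the adjoint $L$-function correctly (so that the residue and constant term at $s = 1$ are expressed in terms of $\gamma_{-1}(F), \gamma_0(F), L(1,\ad\pi), L'(1,\ad\pi)$ with $L(1,\ad\pi) \neq 0$), and tracking which single factor of \eqref{Lfactoriset} is responsible for each pole so that its residue is multiplied by the correct values of the other, nonsingular, factors.
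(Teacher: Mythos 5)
Your proposal is correct and follows exactly the paper's approach: the paper's entire proof is the one-line appeal to the factorisation $L(s,\pi\times\widetilde{\pi})=\zeta_F(s)L(s,\ad\pi)$, and your Laurent/Taylor-series bookkeeping at $s=1$ and $s=1\pm it$ (using $L(1-it,\pi\times\widetilde{\pi})=\overline{L(1+it,\pi\times\widetilde{\pi})}$) is precisely the intended elaboration. Note that your expansion in fact gives $r_{-1}/|L(1+it,\pi\times\widetilde{\pi})|^2 = 2\gamma_{-1}(F)L(1,\ad\pi)\left(\gamma_0(F)L(1,\ad\pi)+\gamma_{-1}(F)L'(1,\ad\pi)\right)+2\gamma_{-1}(F)^2L(1,\ad\pi)^2\Re\left(\tfrac{L'}{L}(1+it,\pi\times\widetilde{\pi})\right)$, so the first term in the lemma as printed is missing a factor of $\gamma_{-1}(F)L(1,\ad\pi)$ (and $\tfrac{L}{L}'$ should read $\tfrac{L'}{L}$) --- a typo that is immaterial for the upper bounds on $r_{-2}$, $r_{-1}$, $r_{-1}^{\pm}$ used later.
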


\begin{proof}
This follows from the factorisation $L(s,\pi \times \widetilde{\pi}) = \zeta_F(s) L(s, \ad \pi)$.
\end{proof}

\begin{lemma}[{\cite[Lemma 5.1]{GoLi}}]
We have that
\begin{align*}
L(1 + it, \pi \times \widetilde{\pi}) & \ll_{\pi} \log(|t| + 3),	\\
L'(1 + it, \pi \times \widetilde{\pi}) & \ll_{\pi} (\log(|t| + 3))^2.
\end{align*}
\end{lemma}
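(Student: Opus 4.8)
The plan is to derive both bounds from an approximate functional equation for $L(s,\pi \times \widetilde\pi)$ — equivalently, from a smoothed Mellin identity followed by a shift of contour — together with the standard convexity bound. Since the Dirichlet coefficients $\lambda_{\pi \times \widetilde\pi}(\aa)$ are nonnegative and $L(s,\pi \times \widetilde\pi)$ has a simple pole at $s = 1$ and is otherwise holomorphic on $\Re(s) \geq 1$, Ikehara's Tauberian theorem (applied to $L(s,\pi \times \widetilde\pi)$ itself, rather than to $-\frac{L'}{L}$ as in the proof of \hyperref[sievelemma]{Lemma \ref*{sievelemma}}) gives $\sum_{N(\aa) \leq x} \lambda_{\pi \times \widetilde\pi}(\aa) \ll_{\pi} x$, and hence $\sum_{N(\aa) \leq x} \lambda_{\pi \times \widetilde\pi}(\aa) N(\aa)^{-1} \ll_{\pi} \log x$ by partial summation. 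The convexity bound, obtained in the usual way from the functional equation relating $L(s, \pi \times \widetilde\pi)$ to $L(1 - s, \pi \times \widetilde\pi)$ (this $L$-function being self-dual), Stirling's formula for the archimedean factors, and the Phragm\'{e}n--Lindel\"{o}f principle applied to $(s - 1) L(s, \pi \times \widetilde\pi)$ so as to accommodate the pole, reads $L(\sigma + i\tau, \pi \times \widetilde\pi) \ll_{\pi} (|\tau| + 3)^{B}$ uniformly for $0 \leq \sigma \leq 2$ and some constant $B = B_{\pi} > 0$; any polynomial bound of this shape will do.

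Now fix $s$ with $|t| \geq 1$ and $1 - c_{\pi}/\log(|t| + 3) \leq \Re(s) \leq 1 + c_{\pi}/\log(|t| + 3)$, so that in particular $s \neq 1$, and let $X \geq 2$ be a parameter to be chosen. Starting from
\[\sum_{\substack{\aa \subset \OO_F \\ \aa \neq \{0\}}} \frac{\lambda_{\pi \times \widetilde\pi}(\aa)}{N(\aa)^{s}}\, e^{-N(\aa)/X} = \frac{1}{2\pi i} \int_{(2)} L(s + w, \pi \times \widetilde\pi)\, \Gamma(w)\, X^{w} \, dw,\]
I would shift the contour to $\Re(w) = -\tfrac{1}{2}$. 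One crosses a simple pole of the integrand at $w = 0$ with residue $L(s, \pi \times \widetilde\pi)$; this pole is simple precisely because $s \neq 1$, so that the pole of $L(\cdot, \pi \times \widetilde\pi)$ at $1$ instead contributes at the well-separated point $w = 1 - s$, where $\Gamma(1 - s)$ decays exponentially in $|t|$ and the residue is therefore $O_{\pi}(1)$. Rearranging, $L(s, \pi \times \widetilde\pi)$ equals the smoothed sum on the left, which is $\ll_{\pi} \log X$ when $c_{\pi}$ is small enough (by the coefficient estimate above, the decay of the weight $e^{-N(\aa)/X}$, and $X^{|\Re(s) - 1|} \ll 1$), minus this $O_{\pi}(1)$ residue, minus $\frac{1}{2\pi i} \int_{(-1/2)} L(s + w, \pi \times \widetilde\pi)\, \Gamma(w)\, X^{w} \, dw$; on the line $\Re(w) = -\tfrac12$ the convexity bound and the exponential decay of $\Gamma$ bound this last integral by $\ll_{\pi} X^{-1/2} (|t| + 3)^{B}$. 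Choosing $X = (|t| + 3)^{2B}$ makes this term $O_{\pi}(1)$ while keeping $\log X \ll_{\pi} \log(|t| + 3)$, and so $L(s, \pi \times \widetilde\pi) \ll_{\pi} \log(|t| + 3)$ throughout the above strip; in particular $L(1 + it, \pi \times \widetilde\pi) \ll_{\pi} \log(|t| + 3)$. For the derivative, Cauchy's integral formula on the circle of radius $\tfrac12 c_{\pi}/\log(|t| + 3)$ about $1 + it$ — which avoids the pole at $s = 1$ because $|t| \geq 1$ — gives
\[L'(1 + it, \pi \times \widetilde\pi) = \frac{1}{2\pi i} \oint_{|w - 1 - it| = \frac{c_{\pi}}{2 \log(|t| + 3)}} \frac{L(w, \pi \times \widetilde\pi)}{(w - 1 - it)^{2}} \, dw \ll_{\pi} \frac{\log(|t| + 3)}{c_{\pi}/\log(|t| + 3)} \ll_{\pi} (\log(|t| + 3))^{2}.\]

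The one substantive ingredient is the convexity bound; the rest is bookkeeping — justifying the shift of contour via the exponential decay of $\Gamma$ against the polynomial growth of $L(\cdot, \pi \times \widetilde\pi)$, keeping the pole of $L(\cdot, \pi \times \widetilde\pi)$ separated from the pole of $\Gamma$ (which is exactly what the hypothesis $|t| \geq 1$ buys us), and verifying that the smoothed sum is genuinely $\ll_{\pi} \log X$. Since the statement is quoted verbatim from \cite[Lemma 5.1]{GoLi}, one may simply cite it; the sketch above records how it goes.
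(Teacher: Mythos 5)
Your proposal is correct and follows essentially the same route as the paper, which simply cites \cite[Lemma 5.1]{GoLi} and notes that the approximate-functional-equation proof carries over from $F = \Q$ to general $F$ mutatis mutandis; your smoothed Mellin identity plus contour shift, with the convexity bound killing the shifted integral and Cauchy's formula on a circle of radius $\asymp 1/\log(|t|+3)$ giving the derivative bound, is precisely that argument written out. The only ingredient you should make explicit is that the convexity bound for $L(s,\pi \times \widetilde{\pi})$ in this generality is supplied by Li \cite{Li}, as the paper does elsewhere.
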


\begin{proof}
This is proved by Goldfeld and Li in \cite[Lemma 5.1]{GoLi} for $F = \Q$; the proof in this more generalised setting (via the approximate functional equation) follows mutatis mutandis.
\end{proof}

Together with the fact that $L(1,\ad \pi) \neq 0$, this shows the following.

\begin{corollary}\label{r2r1cor}
We have that
\begin{align*}
r_{-2} & \ll_{\pi} \left|L(1 + it, \pi \times \widetilde{\pi})\right| \log(|t| + 3),	\\
r_{-1} & \ll_{\pi} \left|L(1 + it, \pi \times \widetilde{\pi})\right| (\log(|t| + 3))^2,	\\
r_{-1}^{\pm} & \ll_{\pi} \left|L(1 + it, \pi \times \widetilde{\pi})\right| (\log(|t| + 3))^2.
\end{align*}
\end{corollary}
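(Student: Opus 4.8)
The plan is to deduce the corollary directly from the two preceding lemmas, reading off each residue from the explicit formula of the first lemma and then estimating the resulting special values of $L$-functions via the second lemma. First I would record that the quantities $\gamma_{-1}(F)$, $\gamma_0(F)$, $L(1,\ad\pi)$, and $L'(1,\ad\pi)$ are all constants depending only on $\pi$ (and $F$): $\gamma_{-1}(F)$ is the residue of $\zeta_F$ at $s=1$, and $L(1,\ad\pi)\neq 0$ is precisely the fact recalled just before the corollary, so any quantity built polynomially out of these is $O_\pi(1)$. The only real content is then to bound the factors $L(1\pm it,\pi\times\widetilde\pi)$, $L(1\pm 2it,\pi\times\widetilde\pi)$, and $\frac{L'}{L}(1+it,\pi\times\widetilde\pi)$ appearing in the formulas for $r_{-2}$, $r_{-1}$, and $r_{-1}^\pm$, and to trade exactly one factor of $|L(1+it,\pi\times\widetilde\pi)|$ for a power of $\log(|t|+3)$ in each case.

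For $r_{-2}$, the first identity gives $r_{-2}=\gamma_{-1}(F)^2 L(1,\ad\pi)^2\,|L(1+it,\pi\times\widetilde\pi)|^2$, so bounding one factor by $|L(1+it,\pi\times\widetilde\pi)|\ll_\pi\log(|t|+3)$ and keeping the other yields the first displayed estimate. For $r_{-1}$, the first summand $2L(1,\ad\pi)(\gamma_0(F)+\gamma_{-1}(F)L'(1,\ad\pi))\,|L(1+it,\pi\times\widetilde\pi)|^2$ is handled the same way, losing only $O_\pi(\log(|t|+3))$. For the second summand I would write $|L(1+it,\pi\times\widetilde\pi)|^2\,\Re\!\big(\tfrac{L'}{L}(1+it,\pi\times\widetilde\pi)\big)=\Re\!\big(\overline{L(1+it,\pi\times\widetilde\pi)}\,L'(1+it,\pi\times\widetilde\pi)\big)$, which is at most $|L(1+it,\pi\times\widetilde\pi)|\,|L'(1+it,\pi\times\widetilde\pi)|\ll_\pi|L(1+it,\pi\times\widetilde\pi)|\,(\log(|t|+3))^2$ by the second bound of the lemma; this gives the estimate for $r_{-1}$.

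For $r_{-1}^\pm$, I would first note that since $\pi\times\widetilde\pi$ is self-dual, $L(s,\pi\times\widetilde\pi)$ has real Dirichlet coefficients, so that $L(1-it,\pi\times\widetilde\pi)=\overline{L(1+it,\pi\times\widetilde\pi)}$ and in particular $|L(1\pm it,\pi\times\widetilde\pi)|=|L(1+it,\pi\times\widetilde\pi)|$. Applying the lemma also at $1\pm 2it$, and using $\log(2|t|+3)\asymp\log(|t|+3)$, gives $|L(1\pm 2it,\pi\times\widetilde\pi)|\ll_\pi\log(|t|+3)$, so the third identity yields $|r_{-1}^\pm|\ll_\pi|L(1+it,\pi\times\widetilde\pi)|^2\log(|t|+3)\ll_\pi|L(1+it,\pi\times\widetilde\pi)|\,(\log(|t|+3))^2$, as claimed. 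There is no substantive obstacle here; the one point requiring care is the $\frac{L'}{L}$ term in $r_{-1}$, where one must combine it with the factor $|L(1+it,\pi\times\widetilde\pi)|^2$ into $\overline{L(1+it,\pi\times\widetilde\pi)}\,L'(1+it,\pi\times\widetilde\pi)$ \emph{before} estimating, so as never to divide by a quantity that could a priori be small.
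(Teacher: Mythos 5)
Your argument is correct and is exactly the route the paper takes: the paper simply notes that the corollary follows from the residue formulas of the preceding lemma together with the bounds $L(1+it,\pi\times\widetilde{\pi})\ll_{\pi}\log(|t|+3)$, $L'(1+it,\pi\times\widetilde{\pi})\ll_{\pi}(\log(|t|+3))^{2}$ and the nonvanishing $L(1,\ad\pi)\neq 0$, which is precisely what you spell out. Your care in rewriting $|L|^{2}\,\Re\bigl(\tfrac{L'}{L}\bigr)$ as $\Re\bigl(\overline{L}\,L'\bigr)$ before estimating is the right reading of the implicit step in the paper.
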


Now let $\psi \in C^{\infty}_c(0,\infty)$ be a fixed nonnegative function satisfying $\psi(x) = 1$ for $x \in [1,2]$ and $\psi(0) = 0$. The Mellin transform of $\psi$ is
\[\widehat{\psi}(s) \defeq \int_{0}^{\infty} \psi(x) x^s \, \frac{dx}{x},\]
which is entire with rapid decay in vertical strips. Define
\[F(Y) = \sum_{\substack{\aa \subset \OO_F \\ \aa \neq \{0\}}} \lambda_{\Pi \times \widetilde{\Pi}}(\aa) \psi\left(\frac{N(\aa)}{Y}\right).\]
We let $\qq(\Pi \times \widetilde{\Pi})$ denotes the analytic conductor of $\Pi \times \widetilde{\Pi}$; from \cite[(5.11)]{IK}, we have that
\begin{equation}\label{5.11}
\qq(\Pi \times \widetilde{\Pi}) \leq \qq(\pi)^{8n} (|t| + 3)^{2n^2 [F : \Q]}.
\end{equation}
On the other hand, it is easily seen that
\[\qq(\Pi \times \widetilde{\Pi}) \gg_{\pi} (|t| + 3)^2.\]

\begin{remark}
While \eqref{5.11} is stated in \cite[(5.11)]{IK}, a complete proof does not seem to have appeared in the literature. In the appendix to this article, a proof of (a more general version of) this statement is given.
\end{remark}

\begin{lemma}[Cf.~{\cite[Proof of Theorem 3]{Bru}}]\label{FPerronlemma}
For $Y \geq \qq(\Pi \times \widetilde{\Pi})$, there exists $\delta > 0$ such that
\begin{multline*}
F(Y) = r_{-2} \widehat{\psi}(1) Y \log Y + \left(r_{-1} \widehat{\psi}(1) + r_{-2} \widehat{\psi}'(1)\right) Y	\\
+ r_{-1}^{+} \widehat{\psi}(1 + it) Y^{1 + it} + r_{-1}^{-} \widehat{\psi}(1 - it) Y^{1 - it} + O(Y^{1 - \delta}).
\end{multline*}
\end{lemma}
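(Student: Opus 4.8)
The plan is to recover $F(Y)$ by Mellin inversion and then to move the contour past the poles of $L(s,\Pi\times\widetilde{\Pi})$, so that the four main terms appear as residues and the remaining integral is the error. Since $\psi\in C_c^\infty(0,\infty)$, the Mellin transform $\widehat{\psi}$ is entire with super-polynomial decay in vertical strips, and $\psi(x)=\frac{1}{2\pi i}\int_{(\sigma)}\widehat{\psi}(s)x^{-s}\,ds$ for every $\sigma\in\R$. Taking $\sigma>1$, inserting this into the definition of $F(Y)$, and interchanging the sum over $\aa$ with the integral---legitimate because the Dirichlet series of $L(s,\Pi\times\widetilde{\Pi})$ converges absolutely for $\Re(s)>1$, as each of the four Rankin--Selberg factors in \eqref{Lfactoriset} does---gives
\[
F(Y)=\frac{1}{2\pi i}\int_{(\sigma)}\widehat{\psi}(s)\,L(s,\Pi\times\widetilde{\Pi})\,Y^s\,ds.
\]

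I would then shift the contour to $\Re(s)=1-\delta$ for a small $\delta>0$ to be fixed at the end. In the strip $1-\delta\le\Re(s)\le\sigma$ the integrand is meromorphic with poles only at $s=1$ (a double pole, from the factor $L(s,\pi\times\widetilde{\pi})^2$) and at $s=1\pm it$ (simple poles); the horizontal segments at height $\pm T$ vanish as $T\to\infty$, because $\widehat{\psi}$ decays faster than any polynomial while $L(s,\Pi\times\widetilde{\Pi})$ grows only polynomially in $|\Im(s)|$ across the strip. Cauchy's residue theorem then expresses $F(Y)$ as the sum of the residues of $\widehat{\psi}(s)L(s,\Pi\times\widetilde{\Pi})Y^s$ at $s=1$ and $s=1\pm it$, plus the shifted integral $\frac{1}{2\pi i}\int_{(1-\delta)}\widehat{\psi}(s)L(s,\Pi\times\widetilde{\Pi})Y^s\,ds$. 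Using the Laurent expansions recorded just before the lemma, the residue at $s=1$ equals $r_{-2}\frac{d}{ds}\bigl[\widehat{\psi}(s)Y^s\bigr]\big|_{s=1}+r_{-1}\widehat{\psi}(1)Y=r_{-2}\widehat{\psi}(1)Y\log Y+(r_{-1}\widehat{\psi}(1)+r_{-2}\widehat{\psi}'(1))Y$, and the residue at $s=1\pm it$ equals $r_{-1}^{\pm}\widehat{\psi}(1\pm it)Y^{1\pm it}$; these are exactly the asserted main terms.

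It remains to bound the shifted integral by $O(Y^{1-\delta})$, which is the only substantive step. On $\Re(s)=1-\delta$, a fixed positive distance from all three poles, the convexity bound---applied to each factor of \eqref{Lfactoriset}, or to $L(s,\Pi\times\widetilde{\Pi})$ directly after clearing poles and invoking Phragm\'{e}n--Lindel\"{o}f---gives $|L(1-\delta+iu,\Pi\times\widetilde{\Pi})|\ll_{\pi,\delta,\e}(|u|+3)^{A}(|t|+3)^{B}$ with $A=A(n,[F:\Q],\delta,\e)$ and $B=O_{n,[F:\Q]}(\delta+\e)$. Combining this with the rapid decay of $\widehat{\psi}$ and integrating in $u$ bounds the shifted integral by $O_{\pi,\delta,\e}\bigl(Y^{1-\delta}(|t|+3)^{O_{n,[F:\Q]}(\delta+\e)}\bigr)$. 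Since $\qq(\Pi\times\widetilde{\Pi})$ grows like a sufficiently large power of $|t|+3$ (indeed $\qq(\Pi\times\widetilde{\Pi})\asymp_{\pi}(|t|+3)^{2n^2[F:\Q]}$, sharpening the bound $\gg_{\pi}(|t|+3)^2$ above), the hypothesis $Y\ge\qq(\Pi\times\widetilde{\Pi})$ forces $Y$ to exceed a large power of $|t|+3$, so the spurious factor $(|t|+3)^{O(\delta+\e)}$ is absorbed into $Y^{\delta/2}$ once $\delta$ and $\e$ are small enough in terms of $n$ and $F$; the shifted integral is then $O(Y^{1-\delta/2+\e})=O(Y^{1-\delta'})$ for some $\delta'>0$, and relabelling $\delta'$ as $\delta$ completes the proof. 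The point requiring care is precisely this trade-off between the power saving $\delta$ and the growth of the Rankin--Selberg $L$-function in the $t$-aspect; the Fubini interchange, the residue computations, and the vanishing of the horizontal segments are all routine.
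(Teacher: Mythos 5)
Your proposal is correct and takes essentially the same route as the paper: Mellin inversion, a contour shift past the poles at $s=1$ and $s=1\pm it$ to extract the stated residues, and a convexity bound combined with the rapid decay of $\widehat{\psi}$ to bound the shifted integral. The only cosmetic difference is that the paper invokes Li's convexity bound for $L(s,\Pi\times\widetilde{\Pi})$ directly in terms of $\qq(\Pi\times\widetilde{\Pi})$, so that $Y\geq\qq(\Pi\times\widetilde{\Pi})$ immediately yields the $O(Y^{1-\delta})$ error, whereas you apply convexity factor by factor and absorb the resulting powers of $|t|+3$ using the same hypothesis.
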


\begin{proof}
Indeed,
\[F(Y) = \int_{\sigma - i\infty}^{\sigma + i\infty} L(s,\Pi \times \widetilde{\Pi}) \widehat{\psi}(s) Y^s \, ds\]
for $\sigma > 1$, and moving the contour to the left shows that $F(Y)$ is equal to
\[\left(\Res_{s = 1} + \Res_{s = 1 + it} + \Res_{s = 1 - it}\right) L(s,\Pi \times \widetilde{\Pi}) \widehat{\psi}(s) Y^s + \frac{1}{2\pi i} \int_{\sigma - i\infty}^{\sigma + i\infty} L(s,\Pi \times \widetilde{\Pi}) \widehat{\psi}(s) Y^s \, ds\]
for any $\sigma < 1$. The convexity bound of Li \cite{Li} and the rapid decay of $\widehat{\psi}$ in vertical strips imply that
\[\frac{1}{2\pi i} \int_{\sigma - i\infty}^{\sigma + i\infty} L(s,\Pi \times \widetilde{\Pi}) \widehat{\psi}(s) Y^s \, ds \ll_{\e} \qq(\Pi \times \widetilde{\Pi})^{\frac{1 - \sigma}{2} + \e} Y^{\sigma},\]
from which the result follows.
\end{proof}

In \cite{Bru} and \cite{Lap}, Brumley notes that
\[F(Y) \geq \sum_{Y \leq N(\aa^{2n}) \leq 2Y} \lambda_{\Pi \times \widetilde{\Pi}}(\aa^{2n})\]
and that $\lambda_{\Pi \times \widetilde{\Pi}}(\aa^{2n}) \geq 1$. This is paired with a modified version of \hyperref[FPerronlemma]{Lemma \ref*{FPerronlemma}} in order to prove effective lower bounds for $\left|L(1 + it, \pi \times \widetilde{\pi})\right|$ in terms of the analytic conductor $\qq(\pi \times \widetilde{\pi}, 1 + it)$. Instead of restricting the sum over integral ideals to those that are $2n$-powers and using the fact that $\lambda_{\Pi \times \widetilde{\Pi}}(\aa^{2n}) \geq 1$, our approach is to restrict to prime ideals at which $\pi$ is unramified and tempered and use sieve theory to show that $\lambda_{\Pi \times \widetilde{\Pi}}(\pp)$ is often not too small on dyadic intervals.

\begin{lemma}\label{unramifiedlemma}
Suppose that $\pi$ is unramified at $\pp$. Then
\[\lambda_{\Pi \times \widetilde{\Pi}}(\pp) = \left|\lambda_{\pi}(\pp)\right|^2 \left|1 + N(\pp)^{it}\right|^2.\]
\end{lemma}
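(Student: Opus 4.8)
The plan is to read off the coefficient of $N(\pp)^{-s}$ on both sides of the factorisation \eqref{Lfactoriset}, mirroring the proof of \hyperref[unramgammalemma]{Lemma \ref*{unramgammalemma}} but working with the $L$-function itself rather than its logarithmic derivative. The only elementary input is that if $D_1(s), \dots, D_k(s)$ are Dirichlet series over nonzero integral ideals of $\OO_F$, each with constant term $1$ (coefficient $1$ at $\aa = \OO_F$), then, since a prime ideal $\pp$ admits no nontrivial factorisation, the coefficient of $N(\pp)^{-s}$ in the product $D_1(s) \cdots D_k(s)$ is the sum of the coefficients of $N(\pp)^{-s}$ in the individual $D_j(s)$. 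Letting $\lambda_{\pi \times \widetilde{\pi}}(\aa)$ denote the Dirichlet coefficients of $L(s,\pi \times \widetilde{\pi})$, and noting that replacing $s$ by $s \pm it$ multiplies the $\pp$-th coefficient by $N(\pp)^{\mp it}$, the factorisation \eqref{Lfactoriset} gives
\[\lambda_{\Pi \times \widetilde{\Pi}}(\pp) = 2\lambda_{\pi \times \widetilde{\pi}}(\pp) + \lambda_{\pi \times \widetilde{\pi}}(\pp) N(\pp)^{-it} + \lambda_{\pi \times \widetilde{\pi}}(\pp) N(\pp)^{it} = \lambda_{\pi \times \widetilde{\pi}}(\pp) \left|1 + N(\pp)^{it}\right|^2.\]

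It then remains to recall that $\lambda_{\pi \times \widetilde{\pi}}(\pp) = \left|\lambda_{\pi}(\pp)\right|^2$ whenever $\pi$ is unramified at $\pp$. This is immediate from the Satake description of the unramified local Rankin--Selberg factor: if $\alpha_{1,\pp}, \dots, \alpha_{n,\pp}$ are the Satake parameters of $\pi$ at $\pp$, then $\lambda_{\pi}(\pp) = \sum_i \alpha_{i,\pp}$, while $\widetilde{\pi}$ has Satake parameters $\alpha_{i,\pp}^{-1} = \overline{\alpha_{i,\pp}}$ by unitarity of $\pi$, and $\pi \times \widetilde{\pi}$ has Satake parameters $\{\alpha_{i,\pp} \overline{\alpha_{j,\pp}}\}_{i,j}$, so that $\lambda_{\pi \times \widetilde{\pi}}(\pp) = \sum_{i,j} \alpha_{i,\pp} \overline{\alpha_{j,\pp}} = \left|\lambda_{\pi}(\pp)\right|^2$. (This identity is in any case already used implicitly in the discussion preceding \hyperref[sievelemma]{Lemma \ref*{sievelemma}}, in the form $\Lambda_{\pi \times \widetilde{\pi}}(\pp) = \left|\lambda_{\pi}(\pp)\right|^2 \log N(\pp)$.) Combining the two displays yields the claim.

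There is no real obstacle: the only points demanding a little care are the bookkeeping for the coefficient of a product of Dirichlet series at a prime ideal and the effect of the shifts $s \mapsto s \pm it$, both routine. Alternatively one can argue purely on the automorphic side: at an unramified $\pp$ the isobaric sum $\Pi$ has the $2n$ Satake parameters $\{\alpha_{i,\pp} N(\pp)^{\pm it/2}\}_{i=1}^{n}$, so $\lambda_{\Pi}(\pp) = \lambda_{\pi}(\pp)\bigl(N(\pp)^{it/2} + N(\pp)^{-it/2}\bigr)$ and $\lambda_{\Pi \times \widetilde{\Pi}}(\pp) = \left|\lambda_{\Pi}(\pp)\right|^2$, which simplifies to the same expression after expanding and using $\left|N(\pp)^{-it/2}\right| = 1$.
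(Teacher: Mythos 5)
Your proof is correct and is essentially the paper's own argument: the paper proves this lemma by the same coefficient-comparison at $\pp$ in the factorisation \eqref{Lfactoriset} that it uses for \hyperref[unramgammalemma]{Lemma \ref*{unramgammalemma}}, together with the unramified identity $\lambda_{\pi \times \widetilde{\pi}}(\pp) = \left|\lambda_{\pi}(\pp)\right|^2$. One pedantic point: the individual equalities $\alpha_{i,\pp}^{-1} = \overline{\alpha_{i,\pp}}$ need temperedness, but unitarity gives the multiset identity $\{\alpha_{i,\pp}^{-1}\} = \{\overline{\alpha_{i,\pp}}\}$, which is all your computation uses.
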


\begin{proof}
This follows via the same method as the proof of \hyperref[unramgammalemma]{Lemma \ref*{unramgammalemma}}.
\end{proof}

\begin{proof}[Proof of {\hyperref[secondthm]{Theorem \ref*{secondthm}}}]
From \hyperref[FPerronlemma]{Lemma \ref*{FPerronlemma}} and \hyperref[r2r1cor]{Corollary \ref*{r2r1cor}}, we have that for $Y \geq \qq(\Pi \times \widetilde{\Pi})$, there exists $\delta > 0$ such that
\[F(Y) \ll_{\pi} \left|L(1 + it, \pi \times \widetilde{\pi})\right| Y (\log Y)^2 + Y^{1 - \delta}.\]
On the other hand, \hyperref[unramifiedlemma]{Lemmata \ref*{unramifiedlemma}} and \ref{sievelemma} imply that for $Y \gg_{\pi} (|t| + 3)^2$,
\[F(Y) \gg_{\pi} \frac{Y}{\log Y}.\]
This gives the lower bound \eqref{secondlowerbound}. Then as in \cite[Proof of Theorem 1.2]{GoLi}, \eqref{secondzerofree} follows via the mean value theorem.
\end{proof}

\section*{}
\vspace{-.6cm}

\subsection*{Acknowledgements}

The author would like to thank Peter Sarnak, Dorian Goldfeld, and Farrell Brumley for helpful discussions and comments.

\appendix

\section{Standard zero-free regions when at least one factor is self-dual, by Farrell Brumley}\footnote{LAGA - Institut Galil\'ee, 99 avenue Jean Baptiste Cl\'ement, 93430 Villetaneuse, France, \url{brumley@math.univ-paris13.fr}}\footnote{Supported by ANR grant 14-CE25}

The aim of this appendix is to provide a proof of the claim, stated in Gelbart--Lapid--Sarnak \cite[p.~92]{GLS} and Gelbart--Lapid \cite[p.~619]{GeLa}, that Rankin--Selberg $L$-functions are known to satisfy a standard zero-free region whenever at least one of the forms is self-dual. This is \hyperref[A1]{Theorem \ref*{A1}} below. The method is through the classical argument of de la Vall\'ee-Poussin. We take advantage of the occasion to clarify parts of the literature, and to justify, in \hyperref[BH-arch]{Lemma \ref*{BH-arch}}, another oft claimed inequality on the archimedean conductor.\footnote{I would like thank Philippe Michel and \'Etienne Fouvry for suggesting that I write up a proof of \hyperref[A1]{Theorem \ref*{A1}}. I am grateful as well to Peter Humphries for allowing me to include this appendix to his paper, and for suggesting many improvements to the proofs and exposition.}

\begin{thm}\label{A1}
Let $n,n'\geq 1$. Let $F$ be a number field. Let $\pi$ and $\pi'$ be unitary cuspidal automorphic representations of $\GL_{n}(\A_F)$ and $\GL_{n'}(\A_F)$, respectively. We normalize $\pi$ and $\pi'$ so that their central characters are trivial on the diagonally embedded copy of the positive reals. Assume that $\pi'$ is self-dual.

There is an effective absolute constant $c>0$ such that $L(s,\pi\times\pi')$ is non-vanishing for all $s=\sigma+it\in\C$ verifying
\[
\sigma\geq 1-\frac{c}{(n+n')^3\log \left( \qq(\pi)\qq(\pi')(|t|+3)^{n[F:\Q]}\right)},
\]
with the possible exception of one real zero whenever $\pi$ is also self-dual.
\end{thm}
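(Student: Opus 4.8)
The plan is to follow the classical de la Vallée–Poussin argument, using the self-duality of $\pi'$ to force the relevant Dirichlet coefficients to be nonnegative. The key construction is an auxiliary isobaric representation. Since $\pi'$ is self-dual, the representation $\Sigma \defeq \pi \boxplus \pi' \boxplus \widetilde{\pi}$ (or, to handle the $t$-aspect, $\pi\otimes|\det|^{it} \boxplus \pi' \boxplus \widetilde{\pi}\otimes|\det|^{-it}$) of $\GL_{2n+n'}(\A_F)$ has the property that $L(s,\Sigma\times\widetilde{\Sigma})$ factors as a product of Rankin–Selberg $L$-functions all of whose coefficients, after taking logarithmic derivatives, are nonnegative — this is the standard positivity input, valid because $-\frac{L'}{L}(s,\Sigma\times\widetilde{\Sigma})$ has nonnegative Dirichlet coefficients (the analogue of the fact cited from \cite[Remark, p.~138]{IK}). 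Expanding the factorisation, $L(s,\Sigma\times\widetilde{\Sigma})$ contains the factors $L(s,\pi\times\widetilde{\pi})$, $L(s,\pi'\times\pi')$, $L(s\pm it,\pi\times\pi')$, $L(s\pm it, \widetilde{\pi}\times\pi')$, $L(s\pm 2it,\pi\times\widetilde{\pi})$, and $\zeta_F(s)$-type factors; the self-duality of $\pi'$ is exactly what makes $L(s+it,\pi\times\pi')$ and $L(s-it,\widetilde\pi\times\pi') = \overline{L(\bar s + i\bar t,\pi\times\pi')}$ conjugate, so a hypothetical zero $\rho = \beta+it$ of $L(s,\pi\times\pi')$ contributes a genuine (real, negative) pole to $-\frac{L'}{L}(s,\Sigma\times\widetilde\Sigma)$ at $s=\beta$.

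The next step is the standard two-sided estimate. On one side, nonnegativity of the coefficients gives $-\frac{L'}{L}(\sigma,\Sigma\times\widetilde\Sigma) \geq 0$ for $\sigma>1$; more precisely, accounting for the double pole of $\zeta_F$-type factors at $s=1$ and the zero at $s=\beta$, one gets an inequality of the shape $-\frac{L'}{L}(\sigma,\Sigma\times\widetilde\Sigma) \leq \frac{A}{\sigma-1} - \frac{m}{\sigma-\beta} + O(\log \qq)$ where $m$ is the order of the zero and $A$ the order of the pole at $s=1$, via the Hadamard factorisation / \cite[(5.28),(5.37)]{IK} bounds applied to each factor, with $\qq = \qq(\Sigma\times\widetilde\Sigma)$. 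The crux is the combinatorial inequality: one checks that in the factorisation the multiplicity with which a pole at $s=1$ appears is strictly smaller than (a suitable multiple of) the multiplicity with which the putative zero at $s=\beta$ appears, when weighted correctly — this is the familiar "$3$-$4$-$1$"-type miracle, here realised through the Rankin–Selberg square of an isobaric sum. Choosing $\sigma = 1 + \frac{c'}{\log\qq}$ and optimising yields $1-\beta \gg \frac{1}{\log\qq}$, and then \eqref{5.11}-type bounds for $\qq(\Sigma\times\widetilde\Sigma)$ in terms of $\qq(\pi),\qq(\pi'),|t|$ convert this to the stated region. The exceptional real zero when $\pi$ is also self-dual arises exactly as in the classical case: then $L(s,\pi\times\pi')$ can have real Dirichlet coefficients that are not sign-definite on the critical strip for $t=0$, so the argument only excludes a neighbourhood of $s=1$ up to at most one real zero.

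The two technical points requiring care are: (i) getting the dependence of the constants on $n,n'$ correct, i.e. tracking that the degree of $\Sigma\times\widetilde\Sigma$ is $(2n+n')^2$ and that the number of factors and the conductor bound \eqref{5.11} contribute the $(n+n')^3$ and the $(|t|+3)^{n[F:\Q]}$ in the denominator — this is where the archimedean conductor bound promised in \hyperref[BH-arch]{Lemma \ref*{BH-arch}} is needed; and (ii) ensuring the pole/zero bookkeeping is valid even when $\pi'\cong\widetilde\pi$ (so that extra poles appear) or when some of the isobaric summands coincide. I expect the main obstacle to be (i): assembling the conductor estimates and the count of $L$-factor contributions into a clean inequality with an honest absolute constant $c$, rather than the positivity argument itself, which is routine once the right auxiliary representation is in hand. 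A secondary subtlety is justifying that $L(s,\Sigma\times\widetilde\Sigma)$ genuinely has the claimed meromorphic continuation and the nonnegativity of $-\frac{L'}{L}$'s coefficients for isobaric, non-cuspidal $\Sigma$ — but this follows from the Langlands–Shahidi / Rankin–Selberg theory for $\GL_m\times\GL_m$ applied factor by factor.
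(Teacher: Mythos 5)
Your construction is exactly the one used in the appendix: the auxiliary isobaric sum $\Pi=\pi\otimes|\det|^{it}\boxplus\widetilde{\pi}\otimes|\det|^{-it}\boxplus\pi'$, the nonnegativity of the Dirichlet coefficients of $-\frac{L'}{L}(s,\Pi\times\widetilde{\Pi})$, a Landau/de la Vall\'ee-Poussin comparison of the pole at $s=1$ against the real zero forced at $s=\beta$ by the self-duality of $\pi'$ and the functional equation, and the conductor bounds \eqref{BH-f} and \eqref{upperRS} (Bushnell--Henniart together with \hyperref[BH-arch]{Lemma \ref*{BH-arch}}) to produce the $(n+n')^3$ and $(|t|+3)^{n[F:\Q]}$; the paper simply invokes \cite[Theorem 5.9]{IK} rather than redoing the two-sided $-L'/L$ estimate you sketch, which is the same mechanism. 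However, the step you defer as ``one checks that'' the zero multiplicity beats the pole multiplicity is precisely where your write-up does not close, because your factorisation is garbled. The correct factorisation is
\[
L(s,\Pi\times\widetilde{\Pi})=L(s,\pi\times\widetilde{\pi})^2\,L(s,\pi'\times\pi')\,L(s+it,\pi\times\pi')^2\,L(s-it,\widetilde{\pi}\times\pi')^2\,L(s+2it,\pi\times\pi)\,L(s-2it,\widetilde{\pi}\times\widetilde{\pi}),
\]
with no stray $\zeta_F$ factors, with $\pi\times\pi$ and $\widetilde{\pi}\times\widetilde{\pi}$ (not $\pi\times\widetilde{\pi}$) in the $\pm 2it$ positions, and with the squares present. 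With these multiplicities, a zero of $L(s,\pi\times\pi')$ of order $r$ at $\beta+it$ forces (via the functional equation and $\widetilde{\pi}'\cong\pi'$) a zero of $L(s,\Pi\times\widetilde{\Pi})$ of order $4r$ at the real point $\beta$, against a pole of order $m=3$ at $s=1$ when $t\neq 0$ or $\pi$ is not self-dual, and $4r\leq m$ gives $r=0$. With your unsquared list one only obtains a zero of order $2r$ at $\beta$ against a pole of order at least $2$ at $s=1$ (larger still with the spurious $\zeta_F$ and misplaced $\pi\times\widetilde{\pi}$ factors), which is no contradiction even for $r=1$; there is no ``correct weighting'' to appeal to other than the exact factorisation above.

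Your explanation of the exceptional zero is also not the right mechanism: it has nothing to do with sign-indefiniteness of the coefficients of $L(s,\pi\times\pi')$. In the paper it is purely a pole count: when $t=0$ and $\pi$ is self-dual, the factors $L(s\pm 2it,\pi\times\pi)$ degenerate to $L(s,\pi\times\widetilde{\pi})$ (and further factors acquire poles if $\pi\cong\pi'$), so the order $m$ of the pole at $s=1$ exceeds $3$ and the comparison $4r\leq m$ no longer forces $r=0$ (in the paper's count, $m=5$ or $7$, giving $r\leq 1$); the bound on the \emph{number} of real zeros in the region comes from \cite[Theorem 5.9]{IK}, which is what limits the exception to a single real zero. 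Note moreover that your incorrect factors $L(s\pm 2it,\pi\times\widetilde{\pi})$ would contribute poles at $s=1$ for $t=0$ whether or not $\pi$ is self-dual, so your bookkeeping would fail to exclude an exceptional zero for non-self-dual $\pi$ and would lose part of the statement. Your point (i), the degree and conductor bookkeeping, is handled exactly as you anticipate and is where \eqref{BH-f} and \hyperref[BH-arch]{Lemma \ref*{BH-arch}} enter.
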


\begin{rem}
In \cite{RW} it is shown that when $n=n'=2$, $L(s,\pi\times\pi')$ satisfies the conclusions of \hyperref[A1]{Theorem \ref*{A1}}, except possibly when it is divisible by the $L$-function of a quadratic character. A similar result holds for $n=n'=3$ when $\pi$ and $\pi'$ are symmetric square lifts from self-dual forms on $\GL_2$.
\end{rem}

\begin{rem} \hyperref[A1]{Theorem \ref*{A1}} implies a standard zero-free region for the $L$-function of a non-self dual cusp form $\pi$ on $\GL_n$, by taking $\pi'$ to be the trivial character on $\GL_1$. The fact that $L(s,\pi)$ admits no exceptional zeros whenever $\pi$ is not self-dual is originally due to Moreno \cite[Theorem 5.1]{Mor77} when $n=2$ and Hoffstein--Ramakrishnan \cite[Corollary 3.2]{HR95} for $n\geq 3$. (For complex characters it is classical.)
\end{rem}

\begin{rem}If $\pi$ is self-dual on $\GL_n$, then \hyperref[A1]{Theorem \ref*{A1}} allows for the possibility of a single {\it exceptional} zero, necessarily real, of $L(s,\pi)$. There are cases when this exceptional zero can be provably eliminated. To the best of our knowledge, these cases are, at the time of this writing, limited to the following situations:
\begin{enumerate}
\item\label{GL2} when $\pi$ is a cusp form on $\GL_2$, due to Hoffstein--Ramakrishnan \cite[Theorem C]{HR95};
\item\label{GL3} when $\pi$ is a cusp form on $\GL_3$. This is due to Banks \cite[Theorem 1]{Banks}, who verifies Hypothesis 6.1 in \cite{HR95}.
\item\label{GL5} when $\pi$ is a cusp form on $\GL_5$ which arises as the symmetric fourth power lift of a cusp form on $\GL_2$ which is not of solvable polynomial type. This is due to Ramakrishnan-Wang \cite{RW}; see the comments after Corollary C in {\it loc cit.};
\item\label{GL68} for the $L$-functions $L(s,\pi,{\rm sym}^6)$ and $L(s,\pi,{\rm sym}^8)$, when $\pi$ is a self-dual cusp from on $\GL_2$. This is Theorem D in \cite{RW}.
\end{enumerate}
All of these results build on the groundbreaking work of \cite{GHL}. Moreover, cases \eqref{GL5} and \eqref{GL68} make full use of the advances in functoriality by Kim and Shahidi \cite{KiSh,Kim}.

\end{rem}

\begin{rem}
We emphasize the importance of the cuspidality condition in \eqref{GL2} and \eqref{GL3} in the above remark, which rules out the divisibility of $L(s,\pi)$ by the $L$-function of a quadratic character.

For example, if $\pi$ is a dihedral form on $\GL_2$ over $F$, induced by a Hecke character $\chi$ of a quadratic field extension $E$, then $L(s,\pi)=L(s,\chi)$. Now if $\pi$ is cuspidal, $\chi$ does not factor through the norm, which (as was remarked in \cite{RW}) rules out $\chi$ real. One can then obtain a standard zero-free region for $\pi$ by appealing to the classical $\GL_1$ case for complex (Hecke) characters over $E$. The original argument given in \cite[Theorem B and Remark 4.3]{HR95} for dihedral forms on $\GL_2$ is, on the surface, more complicated, but this is simply due to to the more general framework in which it is set.

Similarly, the cuspidality condition for $\GL_3$ rules out the possibility that $\pi$ arises as the symmetric square lift of a dihedral form on $\GL_2$.
\end{rem}

\medskip

All of the above remarks pertain to results {\it in the full conductor aspect only}; for the $t$-aspect, we refer to the body of the paper.

\begin{proof}
For $t\in\R$ let
\[
\Pi=\pi\otimes |\det|^{it}\boxplus\widetilde{\pi}\otimes |\det|^{-it}\boxplus\pi'
\]
and $D(s)=L(s,\Pi\times\widetilde\Pi)$. Then we have the factorization $D(s)=L_1(s)L_2(s)$, where
\[
L_1(s)=L(s,\pi\times\widetilde{\pi})^2L(s,\pi'\times\pi')
\]
and
\[
L_2(s)=L(s+it,\pi\times\pi')^2 L(s-it,\widetilde{\pi}\times\pi')^2 L(s+2it,\pi\times\pi) L(s-2it,\widetilde{\pi}\times\widetilde{\pi}).
\]
Let $m\geq 1$ be the order of the pole of $D(s)$ at $s=1$. Then \cite[Theorem 5.9]{IK} (which is based on \cite[Lemma]{GHL}) states that there is a constant $\kappa>0$ such that $D(s)$ has no more than $m$ real zeros in the interval
\begin{equation}\label{range}
1-\frac{\kappa}{(n+n')^2(m+1)\log \qq(\Pi\times\widetilde{\Pi})}<\sigma<1.
\end{equation}

Let us calculate the integer $m$. The factor $L_1(s)$ has a pole of order $3$ at $s=1$. Moreover, if $t\neq 0$ the factor $L_2(s)$ is holomorphic at $s=1$. When $t=0$, the regularity of $L_2(s)$ at $s=1$ depends on whether or not $\pi$ is self-dual:
\begin{enumerate}
\item if $\pi$ is not self-dual and $t=0$, the function $L_2(s)$ is holomorphic at $s=1$, since necessarily $\pi\neq\pi'$ and $\pi\neq\widetilde{\pi}'$; 
\item if $\pi$ is self-dual and $t=0$, the function $L_2(s)$ has a pole of order $2$ or $4$, according to whether $\pi\neq\pi'$ or $\pi=\pi'$.
\end{enumerate}
We deduce that $m=3$ when either $\pi$ is not self-dual or $t\neq 0$. When $\pi$ is self-dual and $t=0$, we have $m=5$ or $7$, according to whether $\pi\neq\pi'$ or $\pi=\pi'$.

Now let $\sigma\in (0,1)$ and suppose that $L(s,\pi\times\pi')$ vanishes to order $r$ at $s=\sigma+it$. By the functional equation and the self-duality of $\pi'$, this is equivalent to $L(\sigma-it,\widetilde{\pi}\times\pi')$ vanishing to order $r$ at $s=\sigma-it$. From this it follows that $L_2(s)$, and hence $D(s)$, has a zero at $s=\sigma$ of order $4r$. Moreover, this is the case regardless of the value of $t$ or whether $\pi$ is self-dual. If $\sigma$ lies in the range \eqref{range}, then since $4r\leq m$, we deduce from the previous paragraph that $r=0$ whenever $\pi$ is not self-dual or $t\neq 0$, and $r\leq 1$ whenever $\pi$ is self-dual and $t=0$.

To finish the argument we must now majorize $\log\qq(\Pi\times\widetilde{\Pi})$. Corresponding to the factorization of $D(s)$ we have
\[
\qq(\Pi\times\widetilde{\Pi})=\qq(\pi\times\widetilde{\pi})^2\qq(\pi'\times\pi')\qq(it;\pi\times\pi')^2\qq(-it;\widetilde{\pi}\times\pi')^2\qq(2it;\pi\times\pi)\qq(-2it;\widetilde{\pi}\times\widetilde{\pi}).
\]
The bounds of \cite[Theorem 1]{BH}, applied to the finite conductor of each factor above, yield
\begin{equation}\label{BH-f}
\qq_f(\Pi\times\widetilde{\Pi})\leq (\qq_f(\pi)^2\qq_f(\pi'))^{4n+2n'}.
\end{equation}
For the archimedean conductor, \hyperref[BH-arch]{Lemma \ref*{BH-arch}} below implies that
\begin{equation}\label{upperRS}
\qq_\infty(\Pi\times\widetilde{\Pi})\leq C_1^{n+n'}(\qq_\infty(\pi)^2\qq_\infty(\pi'))^{4n+2n'}(1+|t|)^{(4nn'+2n^2)[F:\Q]},
\end{equation}
for an absolute constant $C_1>0$. This yields
\[
\log \qq(\Pi\times\widetilde{\Pi})\leq C_2(n+n') \log\left(\qq(\pi)\qq(\pi')(3+|t|)^{n[F:\Q]}\right)
\]
for an absolute constant $C_2>0$, which finishes the proof.\end{proof}

The following result -- the analog at archimedean places of the Bushnell--Henniart bounds \eqref{BH-f} on the Rankin--Selberg conductor -- has been claimed without proof in many sources, including \cite[(5.11)]{IK} and our own \cite{Bru} (to name just two). Nevertheless, there does not seem an available proof in the literature.

\begin{lem}\label{BH-arch}
Let $F_v$ be $\R$ or $\C$. Let $n,n'\geq 1$ be integers. Let $\pi_v$ and $\pi_v'$ be irreducible unitary generic representations of $\GL_n(F_v)$ and $\GL_{n'}(F_v)$, respectively. There is an absolute constant $C>0$ such that
\[
\qq_v(it;\pi\times\pi')\leq C^{n+n'}\qq_v(\pi)^{n'}\qq_v(\pi')^n(1+|t|)^{nn'[F_v:\R]}.
\]
\end{lem}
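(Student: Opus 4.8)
The plan is to reduce the archimedean Rankin--Selberg conductor bound to an explicit computation with Langlands parameters, mirroring the structure of the finite-place argument of Bushnell--Henniart but replacing $\mathfrak{p}$-adic combinatorics with the classical archimedean local factors. First I would recall that, by the archimedean local Langlands correspondence, $\pi_v$ and $\pi_v'$ are attached to semisimple representations $\phi_v$ and $\phi_v'$ of the Weil group $W_{F_v}$ of dimensions $n$ and $n'$; since $\pi_v,\pi_v'$ are generic and unitary, these parameters decompose as direct sums of characters of $W_{\R}$ or $W_{\C}$ (characters $z\mapsto |z|^{s}$, or induced two-dimensional pieces in the real case), with the real parts of the relevant exponents bounded in terms of the Ramanujan--type bounds one has unconditionally (the trivial bound $<1/2$ suffices). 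The analytic conductor $\qq_v(\pi_v)$ is, up to absolute constants, $\prod_j (1+|\mu_j|)$ over the shifts $\mu_j$ appearing in the $\Gamma$-factors of $L(s,\pi_v)$, and similarly $\qq_v(it;\pi_v\times\pi_v')$ is governed by the shifts $\mu_j+\mu_k'+it$ running over all $nn'$ pairs (each pair possibly contributing $[F_v:\R]$ gamma factors in the $F_v=\C$ case).

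The key step is then the elementary inequality $1+|\mu_j+\mu_k'+it| \leq (1+|\mu_j|)(1+|\mu_k'|)(1+|t|)$, valid for any complex numbers, together with its mild variants accounting for the bounded shift in real parts. Taking the product over the $nn'[F_v:\R]$ relevant gamma factors gives
\[
\qq_v(it;\pi_v\times\pi_v') \ll \prod_{j,k}(1+|\mu_j|)^{[F_v:\R]}(1+|\mu_k'|)^{[F_v:\R]}(1+|t|)^{[F_v:\R]},
\]
and since $\mu_j$ appears in $n'$ of these pairs and $\mu_k'$ in $n$ of them, the right-hand side is bounded by a constant to the power $n+n'$ times $\qq_v(\pi_v)^{n'}\qq_v(\pi_v')^{n}(1+|t|)^{nn'[F_v:\R]}$, which is exactly the claimed bound. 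The absolute constant $C$ absorbs the discrepancy between the naive product of $(1+|\mu_j|)$ and the precise normalisation of $\qq_v$ in \cite[(5.7)]{IK} (which involves the combination $3+|\mu_j|$ or the like), as well as the finitely many ways the induced two-dimensional parameters of $W_{\R}$ contribute pairs of gamma factors with shifts differing by a bounded amount.

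The main obstacle I anticipate is bookkeeping rather than conceptual: one must pin down precisely which shifts occur in the gamma factor of $L(s,\pi_v\times\pi_v')$ for all four combinations of $F_v\in\{\R,\C\}$ and parity of the characters in the real case, since the archimedean Rankin--Selberg $\Gamma$-factor is not simply the ``product of all pairwise products'' but involves the recipe for tensor products of $W_{\R}$-representations (e.g. the tensor product of two sign-twisted characters, or of a character with a two-dimensional induced piece, must be re-decomposed into irreducibles before writing down gamma factors). Handling this carefully — and checking that in every case the number of resulting gamma factors is at most $nn'[F_v:\R]$ and each shift satisfies the triangle-inequality bound above — is where the real work lies. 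A secondary point is ensuring the real-part shifts stay controlled: since $\pi_v,\pi_v'$ are unitary generic, the exponents are bounded (trivially by $1/2$, or better by known bounds toward Ramanujan), so $1+|\mu_j+\mu_k'+it|$ is comparable to $(1+|\Im\mu_j+\Im\mu_k'+t|)$ up to an absolute constant, and this constant too is swept into $C^{n+n'}$.
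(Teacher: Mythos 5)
Your plan is essentially the paper's own proof: reduce via the archimedean Langlands classification to irreducible representations of the Weil group $W_{F_v}$, compute the Gamma shifts of each tensor product of irreducibles case by case, and finish with the triangle inequality together with the elementary bound $1+|t|+|\mu|+|\mu'|\leq(1+|\mu|)(1+|\mu'|)(1+|t|)$; the case-by-case bookkeeping you defer (characters of $W_\C$, and the three combinations of one- and two-dimensional parameters of $W_\R$) is precisely what the appendix carries out. One correction, though, since it concerns the only non-routine estimate: your remark that unitarity makes $1+|\mu_j+\mu_k'+it|$ comparable to $1+|\Im\mu_j+\Im\mu_k'+t|$ is false, because the shifts $\mu=\nu+|k|/2$ attached to discrete-series-type parameters have unbounded \emph{real} part; only $\Re\nu$ is bounded. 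The genuine role of the Jacquet--Shalika/Rudnick--Sarnak bound $|\Re\nu|\leq 1/2$ is the following: when $k$ and $k'$ have opposite signs (over $\C$), the tensor product is $\chi_{k+k',\nu+\nu'}$, whose shift is $|k+k'|/2+\nu+\nu'$ rather than $\mu+\mu'$, so after the triangle inequality one must bound $\frac{|k|}{2}+|\nu|$ by an absolute multiple of $|\mu|$, and this is exactly what the bound on $\Re\nu$ provides (the short computation in the paper's proof). The hypothesis is not cosmetic: for non-unitary parameters with $\nu$ near $-|k|/2$ the statement fails outright, since $\qq_v(\varphi)$ and $\qq_v(\varphi')$ stay bounded while $\qq_v(it;\varphi\otimes\varphi')$ grows with $|k|$. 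With that point repaired, your outline matches the appendix's argument.
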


\begin{rem}
The constant $C$ in \hyperref[BH-arch]{Lemma \ref*{BH-arch}} can be explicitly computed, and the proof gives an exact value. But since the archimedean conductor should not be considered an ``exact quantity" (and conventions for the definition vary according to the source), it makes little sense to include the precise value of $C$ in the estimate.
\end{rem}

\begin{rem}
In the course of the proof, we shall recall the definition of the archimedean Rankin--Selberg and standard analytic conductor as given by Iwaniec--Sarnak in \cite{IS}. Their {\it ad hoc} recipe boils down to taking the product over all Gamma shifts arising in the local $L$-factors. It will be apparent that the definition of $\qq_v(it;\pi\times\pi')$ can be made in the admissible (rather than unitary generic) dual. One may drop the hypothesis of genericity (but not unitarity) in the statement of \hyperref[BH-arch]{Lemma \ref*{BH-arch}} at the price of allowing the constant $C$ to depend linearly on $n$ and $n'$.
\end{rem}

\begin{rem}\label{HR-remark}
In \cite[Lemma b]{HR95}, the authors prove something close to \hyperref[BH-arch]{Lemma \ref*{BH-arch}}, but their result only yields an upper bound of the form
\begin{equation}\label{HR-bound}
\qq_v(it;\pi\times\pi')\leq C(\qq_v(\pi)\qq_v(\pi')(1+|t|))^B,
\end{equation}
for some constants $B,C>0$, depending on $n$ and $n'$. Indeed, the archimedean factor of the ``thickened level" $M(\pi)$ introduced in [{\it loc. cit.}, Definition 1.4], is defined using the {\it sum}, rather than the product, of all Gamma shifts. (Note that in \cite{GHL} the max of the Gamma shifts is taken.) Thus $M_v(\pi)$, for $v\mid\infty$, behaves quantitatively much differently than the archimedean factor of the analytic conductor of Iwaniec--Sarnak \cite{IS}. Since its appearance, the latter has become the preferred measure of complexity in the study of $L$-functions.

It should be emphasized that since
\[
\log \qq_v(it;\pi\times\pi')\asymp \log M_v(it;\pi\times\pi')\quad\text{ and }\quad \log\qq_v(it;\pi)\asymp \log M_v(it;\pi),
\]
the bounds \eqref{upperRS} with {\it unspecified} exponents are consequences of the work of Hoffstein--Ramakrishnan. Thus the proof of \hyperref[A1]{Lemma \ref*{A1}} can be made to be independent of \hyperref[BH-arch]{Lemma \ref*{BH-arch}}, at the price of an inexplicit dependence in the implied constant on $n$ and $n'$.

In any case, the proof of \hyperref[BH-arch]{Lemma \ref*{BH-arch}} is closely modelled on that of \cite[Lemma b]{HR95}, with the appropriate modifications for dealing with analytic conductor. 
\end{rem}

\begin{proof} 
Using Langlands' classification of the admissible dual of $\GL_n(F_v)$ (see, for example, \cite{Kna94}), $\pi_v$ and $\pi_v'$ correspond to $\oplus\varphi_i$ and $\oplus\varphi_j'$, for irreducible representations $\varphi_i$ and $\varphi_j'$ of the Weil group $W_{F_v}$ of $F_v$. By definition, we have
\[
L(s,\pi_v)=\prod_i L(s,\varphi_i),\quad L(s,\pi_v')=\prod_j L(s,\varphi_j'),\quad L(s,\pi_v\times\pi_v')=\prod_{i,j}L(s,\varphi_i\otimes\varphi_j'),
\]
which gives rise to similar factorizations of the associated conductors. Let $d_i,d_j'$ denote the dimensions of $\varphi_i$ and $\varphi_j'$, respectively, so that $n=\sum d_i$ and $n'=\sum d_j'$. Dropping the indices $i$ and $j$, we must therefore prove that for irreducible representations $\varphi$ and $\varphi'$ of $W_{F_v}$, of respective dimensions $d$ and $d'$, we have
\begin{equation}\label{reduction}
\qq_v(it;\varphi\otimes\varphi')\leq C\qq_v(\varphi)^{d'}\qq_v(\varphi)^d(1+|t|)^{dd'[F_v:\R]},
\end{equation}
for an absolute constant $C>0$.

When $F_v=\C$, one has $W_\C=\C^\times$, so that all irreducible representations are one-dimensional. We may write any such character as $\chi_{k,\nu}(z)=(z/|z|)^k|z|^{2\nu}$, for $k\in\mathbb{Z}$ and $\nu\in\C$. Letting $\mu=\nu+|k|/2$, the associated $L$-factor (see \cite[(4.6)]{Kna94}) is $\Gamma_\C(s+\mu)$. The recipe of Iwaniec--Sarnak \cite[(21) and (31)]{IS} gives 
\[
\qq_v(it;\varphi)=(1+|it+\mu|)^2.
\]
Now if $\varphi=\chi_{k,\nu}$ and $\varphi'=\chi_{k',\nu'}$, then $\varphi\otimes\varphi'=\chi_{k+k',\nu+\nu'}$. This implies that 
\[
\qq_v(it;\varphi\otimes\varphi')=(1+|it+|k+k'|/2+\nu+\nu'|)^2.
\]
An application of the triangle inequality yields
\[
\qq_v(it;\varphi\otimes\varphi')\leq \left(1+|t|+\left(\frac{|k|}{2}+|\nu|\right)+\left(\frac{|k'|}{2}+|\nu'|\right)\right)^2.
\]
We claim that $\frac{|k|}{2}+|\nu|\ll |\frac{|k|}{2}+\nu|=|\mu|$. We may assume that $k\neq 0$. On one hand,
\[
\left(\frac{|k|}{2}+|\nu|\right)^2= \frac{k^2}{4}+|\nu|^2+|k||\nu|\leq \frac{k^2}{4}+|\nu|^2+\frac{k^2}{2}+\frac{|\nu|^2}{2}\leq 3\left(\frac{k^2}{4}+|\nu|^2\right).
\]
On the other, since $\pi$ and $\pi'$ are unitary generic, the Jacquet--Shalika bounds $|{\rm Re}(\nu)|\leq 1/2$ \cite[Corollary 2.5]{JS} (extended to the archimedean places by Rudnick--Sarnak in \cite[\S A.3]{RS}) imply
\[
\frac{k^2}{4}+|\nu|^2\leq |\mu|^2+\frac{|k|}{2}=|\mu|^2\left(1+\frac{|k|}{2|\mu|^2}\right)\leq |\mu|^2\left(1+\frac{2}{|k|}\right)\leq 3|\mu|^2.
\]
This proves the claim and implies $\qq_v(it;\varphi\otimes\varphi')\ll (1+|t|+|\mu|+|\mu'|)^2$. Using
\begin{equation}
\begin{aligned}\label{boring}
1+|t|+|\mu|+|\mu'|&\leq 1+|t|+|\mu|+|\mu'|+|\mu\mu'|\\
&=(1+|\mu|)(1+|\mu'|)+|t|\\
&\leq (1+|\mu|)(1+|\mu'|)(1+|t|),
\end{aligned}
\end{equation}
we establish \eqref{reduction} in the case $F_v=\C$.

When $F_v=\R$, each irreducible representation $\varphi$ of $W_\R=\C^\times\cup j\C^\times$ is of dimension 1 or 2. If $\varphi$ is one-dimensional, then its restriction to $\C^\times$ is $\chi_{0,\nu}$ for $\nu\in\C$ (see \cite[(3.2)]{Kna94}). We let $k=1-\varphi(j)\in\{0,2\}$. Writing $\mu=\nu+k/2$, we have $L(s,\varphi)=\Gamma_\R(s+\mu)$ and
\[
\qq_v(it,\varphi)=1+|it+\mu|.
\]
If $\varphi$ is two-dimensional, then it is the induction of $\chi_{k,\nu}$ from $\C^\times$ to $\GL_2(\R)$, where $k\geq 1$ and $\nu\in\C$. Putting $\mu=\nu+k/2$ we have $L(s,\varphi)=\Gamma_\C(s+\mu)$ and 
\[
\qq_v(it;\varphi)=(1+|it+\mu|)^2.
\]
In either case, let $(k,\nu)$ and $(k',\nu')$ be the parameters associated with $\varphi$ and $\varphi'$, respectively. We now examine the tensor products parameters.
\begin{enumerate}
\item If both $\varphi$ and $\varphi'$ are one-dimensional, then so is $\varphi\otimes\varphi'$, with parameter $(1-\varphi(j)\varphi'(j),\nu+\nu')$. Then \eqref{reduction} reads
\[
1+\bigg|it+\frac{1-\varphi(j)\varphi'(j)}{2}+\nu+\nu'\bigg|\leq C(1+|\mu|)(1+|\mu'|)(1+|t|).
\]
Applying the triangle inequality and $1-\varphi(j)\varphi'(j)\leq (1-\varphi(j))+(1-\varphi'(j))=k+k'$, the left-hand side is bounded above by
\[
1+|t|+\left(\frac{k}{2}+|\nu|\right)+\left(\frac{k'}{2}+|\nu'|\right).
\]
The same reasoning as in the complex case then establishes \eqref{reduction}.

\item If $\varphi$ is one-dimensional, and $\varphi'$ is irreducible and two-dimensional, then the twist $\varphi\otimes\varphi'$ is irreducible and two-dimensional, induced from $\C^\times$ by the character $\chi_{0,\nu}\chi_{k',\nu'}=\chi_{k',\nu+\nu'}$. Thus $\varphi\otimes\varphi'$ has parameters $(k',\nu+\nu')$. Inequality \eqref{reduction} is then equivalent to
\[
1+|it+\mu+\mu'|\leq C(1+|\mu|)(1+|\mu'|)(1+|t|).
\]
This follows (with $C=1$) from the triangle inequality and \eqref{boring}.

\item Suppose that $\varphi$ and $\varphi'$ are both irreducible and two-dimensional, and let $k\geq k'$. Then $\varphi\otimes\varphi'$ is the direct sum of two two-dimensional representations, induced from $\C^\times$ from the characters $\chi_{k,\nu}\chi_{k',\nu'}=\chi_{k+k',\nu+\nu'}$ and $\chi_{-k,-\nu}\chi_{k',\nu'}=\chi_{k'-k,\nu'-\nu}$. (Note that the latter representation is reducible when $k=k'$.) This shows that
\[
L(s,\varphi\otimes\varphi')=\Gamma_\C(s+\mu+\mu')\Gamma_\C(s+\mu-\mu')
\]
and
\[
\qq_v(it;\varphi\otimes\varphi')=(1+|\mu+\mu'|)^2(1+|\mu-\mu'|)^2.
\]
Then \eqref{reduction} is equivalent to 
\[
(1+|it+\mu+\mu'|)(1+|it+\mu-\mu'|)\leq C(1+|\mu|)^2(1+|\mu'|)^2(1+|t|)^2.
\]
This follows (with $C=1$) from applying the triangle inequality and \eqref{boring} to each factor on the left-hand side.
\end{enumerate}
This completes the proof of \hyperref[BH-arch]{Lemma \ref*{BH-arch}}.
\end{proof}


\begin{thebibliography}{GMP17}

\bibitem[BR76]{BR} R.~Balasubramanian and K.~Ramachandra, ``The Place of an Identity of Ramanujan in Prime Number Theory'', \textit{Proceedings of the Indian Academy of Sciences, Section A} \textbf{83}:4 (1976), 156--165. \textsc{doi}:\allowbreak\href{https://doi.org/10.1007/BF03051376}{10.1007/BF03051376}

\bibitem[Ban97]{Banks} William D.~Banks, ``Twisted Symmetric Square $L$-Functions and the Non-Existence of Siegel Zeros on $\GL(3)$'', \textit{Duke Mathematical Journal} \textbf{87}:2 (1997), 343--353. \textsc{doi}:\allowbreak\href{https://doi.org/10.1215/S0012-7094-97-08713-5}{10.1215/S0012-7094-97-08713-5}

\bibitem[BH97]{BH} C.~J.~Bushnell and G.~Henniart, ``An Upper Bound on Conductors for Pairs'', \textit{Journal of Number Theory} \textbf{65}:2 (1997), 183--196. \textsc{doi}:\allowbreak\href{https://doi.org/10.1006/jnth.1997.2142}{10.1006/jnth.1997.2142}

\bibitem[Bru06]{Bru} Farrell Brumley, ``Effective Multiplicity One on $\GL(n)$ and Zero-Free Regions of Rankin--Selberg $L$-Functions'', \textit{American Journal of Mathematics} \textbf{128}:6 (2006), 1455--1474. \textsc{doi}:\allowbreak\href{https://doi.org/10.1353/ajm.2006.0042}{10.1353/ajm.2006.0042}

\bibitem[GeLa06]{GeLa} Stephen S.~Gelbart and Erez M.~Lapid, ``Lower Bounds for $L$-Functions at the Edge of the Critical Strip'', \textit{American Journal of Mathematics} \textbf{128}:3 (2006), 619--638. \textsc{doi}:\allowbreak\href{https://doi.org/10.1353/ajm.2006.0024}{10.1353/ajm.2006.0024}

\bibitem[GLS04]{GLS} Stephen S.~Gelbart, Erez M.~Lapid, and Peter Sarnak, ``A New Method for Lower Bounds of $L$-Functions'', \textit{Comptes Rendus de l'Acad\'{e}mie des Sciences. S\'{e}rie 1, Math\'{e}matique} \textbf{339}:2 (2004), 91--94. \textsc{doi}:\allowbreak\href{https://doi.org/10.1016/j.crma.2004.04.024}{10.1016/j.crma.2004.04.024}

\bibitem[GHL94]{GHL} Dorian Goldfeld, Jeffrey Hoffstein and Daniel Lieman, ``An Effective Zero-Free Region'', appendix to ``Coefficients of Maass Forms and the Siegel Zero'' by Jeffrey Hoffstein and Paul Lockhart, \textit{Annals of Mathematics} \textbf{140}:1 (1994), 177--181. \textsc{doi}:\allowbreak\href{https://doi.org/10.2307/2118544}{10.2307/2118544}

\bibitem[GoLi17]{GoLi} Dorian Goldfeld and Xiaoqing Li, ``A Standard Zero Free Region for Rankin Selberg $L$-Functions'', \textit{International Mathematics Research Notices} rnx087 (2017), 70 pages. \textsc{doi}:\allowbreak\href{https://doi.org/10.1093/imrn/rnx087}{10.1093/imrn/rnx087}

\bibitem[GMP17]{GMP} Lo\"{i}c Greni\'{e}, Giuseppe Molteni, and Alberto Perelli, ``Primes and Prime Ideals in Short Intervals'', \textit{Mathematika} \textbf{63}:2 (2017), 364--371. \textsc{doi}:\allowbreak\href{https://doi.org/10.1112/S0025579316000310}{10.1112/S0025579316000310}

\bibitem[HR95]{HR95} Jeffrey Hoffstein and Dinakar Ramakrishnan, ``Siegel Zeros and Cusp Forms'', \textit{International Mathematics Research Notices} \textbf{1995}:6 (1995), 279--308. \textsc{doi}:\allowbreak\href{https://doi.org/10.1155/S1073792895000225}{10.1155/S1073792895000225}

\bibitem[Hum17]{Hum} Peter Humphries, ``Effective Lower Bounds for $L(1,\chi)$ via Eisenstein Series'', \textit{Pacific Journal of Mathematics} \textbf{288}:2 (2017), 355--375. \textsc{doi}:\allowbreak\href{https://doi.org/10.2140/pjm.2017.288.355}{10.2140/pjm.2017.288.355}

\bibitem[IK04]{IK} Henryk Iwaniec and Emmanuel Kowalski, \textit{Analytic Number Theory}, American Mathematical Society Colloquium Publications \textbf{53}, American Mathematical Society, Providence, 2004. \textsc{doi}:\allowbreak\href{https://doi.org/10.1090/coll/053}{10.1090/coll/053}

\bibitem[IS00]{IS} H.~Iwaniec and P.~Sarnak, ``Perspectives on the Analytic Theory of $L$-Functions'', in \textit{Vision in Mathematics. GAFA 2000 Special Volume, Part II}, editors N.~Alon, J.~Bourgain, A.~Connes, M.~Gromov, and V.~Milman, Birkh\"{a}user, Basel, 2000, 705--741. \textsc{doi}:\allowbreak\href{https://doi.org/10.1007/978-3-0346-0425-3_6}{10.1007/978-3-0346-0425-3\_6}

\bibitem[JS81]{JS} H.~Jacquet and J.~A.~Shalika, ``On Euler Products and the Classification of Automorphic Representations I'', \textit{American Journal of Mathematics} \textbf{103}:3 (1981), 499--558. \textsc{doi}:\allowbreak\href{https://doi.org/10.2307/2374103}{10.2307/2374103}

\bibitem[Kim03]{Kim}Henry H.~Kim, ``Functoriality for the Exterior Square of $\GL_4$ and the Symmetric Fourth of $\GL_2$'', with Appendix 1 by Dinakar Ramakrishnan and Appendix 2 by Henry H.~Kim and Peter Sarnak, \textit{Journal of the American Mathematical Society} \textbf{16}:1 (2003), 139--183. \textsc{doi}:\allowbreak\href{https://doi.org/10.1090/S0894-0347-02-00410-1}{10.1090/S0894-0347-02-00410-1}

\bibitem[KS02]{KiSh} Henry H.~Kim and Freydoon Shahidi, ``Cuspidality of Symmetric Powers with Applications", \textit{Duke Mathematical Journal} \textbf{112}:1 (2002), 177--197. \textsc{doi}:\allowbreak\href{https://doi.org/10.1215/S0012-9074-02-11215-0}{10.1215/S0012-9074-02-11215-0}

\bibitem[Kna94]{Kna94} A.~W.~Knapp, ``Local Langlands Correspondence: The Archimedean Case'', in \textit{Motives}, editors Uwe Jannsen, Steven L.~Kleiman, and Jean-Pierre Serre, Proceedings of Symposia in Pure Mathematics \textbf{55}:2, American Mathematical Society, Providence, 1994, 393--410. \textsc{doi}:\allowbreak\href{https://doi.org/10.1090/pspum/055.2}{10.1090/pspum/055.2}

\bibitem[Lap13]{Lap} Erez Lapid, ``On the Harish-Chandra Schwartz Space of $G(F) \backslash G(\A)$'', with an appendix by Farrell Brumley, in \textit{Automorphic Representations and $L$-Functions. Proceedings of the International Colloquium, Mumbai 2012}, editors D.~Prasad, C.~S.~Rajan, A.~Sankaranarayanan, and J.~Sengupta, Hindustan Book Agency, New Delhi, 2013, 335--377.

\bibitem[Li09]{Li} Xiannan Li, ``Upper Bounds for $L$-Functions at the Edge of the Critical Strip'', \textit{International Mathematics Research Notices} \textbf{2010}:4 (2010), 727--755. \textsc{doi}:\allowbreak\href{https://doi.org/10.1093/imrn/rnp148}{10.1093/imrn/rnp148}

\bibitem[Mor77]{Mor77} C.~J.~Moreno, ``Explicit Formulas in the Theory of Automorphic Forms'', in \textit{Number Theory Day}, editor Melvyn B.~Nathanson, Lecture Notes in Mathematics \textbf{626}, Springer--Verlag, Berlin, 1977, 73--216. \textsc{doi}:\allowbreak\href{https://doi.org/10.1007/BFb0063065}{10.1007/BFb0063065}

\bibitem[Mor85]{Mor85} Carlos J.~Moreno, ``Analytic Proof of the Strong Multiplicity One Theorem'', \textit{American Journal of Mathematics} \textbf{107}:1 (1985), 163--206. \textsc{doi}:\allowbreak\href{https://doi.org/10.2307/2374461}{10.2307/2374461}

\bibitem[RW03]{RW} Dinakar Ramakrishnan and Song Wang, ``On the Exceptional Zeros of Rankin--Selberg $L$-Functions'', \textit{Compositio Mathematica} \textbf{135}:2 (2003), 211--244. \textsc{doi}:\allowbreak\href{https://doi.org/10.1023/A:1021761421232}{10.1023/A:1021761421232}

\bibitem[RS96]{RS} Ze\'{e}v Rudnick and Peter Sarnak, ``Zeros of Principal $L$-Functions and Random Matrix Theory'', \textit{Duke Journal of Mathematics} \textbf{81}:2 (1996), 269--322. \textsc{doi}:\allowbreak\href{https://doi.org/10.1215/S0012-7094-96-08115-6}{10.1215/S0012-7094-96-08115-6}

\bibitem[Sar04]{Sar} Peter Sarnak, ``Nonvanishing of $L$-Functions on $\Re(s) = 1$'', in \textit{Contributions to Automorphic Forms, Geometry \& Number Theory}, editors Haruzo Hida, Dinakar Ramakrishnan, and Freydoon Shahidi, The John Hopkins University Press, Baltimore, 2004, 719--732.

\bibitem[Sha80]{Sha} Freydoon Shahidi, ``On Nonvanishing of $L$-Functions'', \textit{Bulletin of the American Mathematical Society} \textbf{2}:3 (1980), 462--464. \textsc{doi}:\allowbreak\href{https://doi.org/10.1090/S0273-0979-1980-14769-2}{10.1090/S0273-0979-1980-14769-2}

\bibitem[Tit86]{Tit} E.~C.~Titchmarsh, \textit{The Theory of the Riemann Zeta-function, Second Edition}, revised by D.~R.~Heath-Brown, Clarendon Press, Oxford, 1986.

\end{thebibliography}
\end{document}